\newtheorem{thm}{Theorem}[section]
\newtheorem{prop}[thm]{Proposition}
\newtheorem{lem}[thm]{Lemma}
\theoremstyle{definition}
\newtheorem{defn}[thm]{Definition}
\theoremstyle{remark}
\newtheorem{rem}[thm]{Remark}
\let\c@equation\c@thm
\numberwithin{equation}{section}
\definecolor{ao}{hsb}{0.67,1,1}
\definecolor{or}{hsb}{0.067,1,1}
\definecolor{green}{hsb}{0.33,1,0.5}
\title{Torus decomposition and foliation detected slopes}
\author{Qingfeng Lyu}
\address{Department of Mathematics\\
  Boston College\\
  Chestnut Hill, MA 02467}
\email{lyuqi@bc.edu}
\begin{document}

\begin{abstract} 
    Let $M_1$ and $M_2$ be knot manifolds and $M=M_1\cup_f M_2$ be the closed manifold obtained by gluing up $M_1$ and $M_2$ via $f:\partial M_1\xrightarrow{\cong} \partial M_2$. We show that if $M$ admits a co-oriented taut foliation, then $f$ identifies some CTF-detected rational boundary slopes of $M_1$ and $M_2$, affirming a conjecture proposed in~\cite{boyer2021slope}.
\end{abstract}

\maketitle

\section{introduction}

The notions of slope detections were introduced by Boyer-Clay~\cite{boyer2017foliations} and Boyer-Gordon-Hu~\cite{boyer2021slope}, providing new viewpoints for the L-space conjecture. In this paper we focus on the foliation detection side:

\begin{defn}[\cite{boyer2021slope}, Definition 5.1]
    Let $M$ be a 3-manifold with $\partial M\cong T^2$. A \textit{boundary slope} [$\alpha$] is an element in the projective space $\mathrm{PH}_1(\partial M,\mathbb{R})$. It is
    \textit{CTF-detected} if there exists a co-oriented taut foliation $\mathcal{F}$ of $M$ intersecting $\partial M$ transversely, such that $\mathcal{F}|_{\partial M}$ is a suspension foliation of the slope [$\alpha$]. It is \textit{strongly CTF-detected} if there exists such an $\mathcal{F}$ where $\mathcal{F}|_{\partial M}$ is in fact a linear foliation of the slope [$\alpha$]. The set of CTF-detected slopes of $M$ is denoted as $\mathcal{D}_{CTF}(M)$.
    \label{defn:ctf}
\end{defn}

A knot manifold is a compact, connected, orientable, and irreducible 3-manifold with boundary an incompressible torus. In~\cite{boyer2021slope} a useful gluing theorem for CTF-detections was proved regarding knot manifolds:

\begin{thm}[\cite{boyer2021slope}, Theorem 5.2]
    Suppose that $M_1$ and $M_2$ are knot manifolds and $M=M_1\cup_fM_2$, where $f:\partial M_1\rightarrow \partial M_2$ is the gluing homeomorphism. If $f$ identifies CTF-detected rational slopes $[\alpha_1]\in \mathcal{D}_{CTF}(M_1)$ and $[\alpha_2]\in \mathcal{D}_{CTF}(M_2)$, then $M$ admits a co-orientable taut foliation.
    \label{thm:gluing}
\end{thm}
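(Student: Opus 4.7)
The plan is to begin with co-oriented taut foliations $\mathcal{F}_1$ on $M_1$ and $\mathcal{F}_2$ on $M_2$ witnessing the CTF-detection of $[\alpha_1]$ and $[\alpha_2]$, to modify each in a boundary collar so that its trace on $\partial M_i$ becomes a standard foliation matched by $f$, and then to glue the two pieces along the common torus.

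First I would \emph{straighten the boundary traces}. Because $[\alpha_i]$ is rational, the suspension foliation $\mathcal{F}_i|_{\partial M_i}$ carries a compact leaf $c_i$ of slope $[\alpha_i]$, and the union of its compact leaves cuts $\partial M_i$ into finitely many open annuli carrying product foliations. Working inside a collar $\partial M_i\times[0,1]$, I would thicken the compact leaves into annular sublayers of $\mathcal{F}_i$ and, across each product region, interpolate by a collar foliation whose non-compact leaves spiral until they close up to circles of slope $[\alpha_i]$ on $\partial M_i\times\{0\}$. The outcome should be a new CTF $\mathcal{F}_i'$ on $M_i$ whose trace on $\partial M_i$ is the linear foliation of slope $[\alpha_i]$; co-orientation is preserved by prescribing the transverse direction on the interpolating collar, and tautness is preserved because closed transversals of $\mathcal{F}_i$ can be pushed across the collar.

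Next I would \emph{match and glue}. The hypothesis $f_*[\alpha_1]=[\alpha_2]$, combined with an ambient isotopy of $f$, lets me assume that $f$ sends the linear foliation of slope $[\alpha_1]$ to the linear foliation of slope $[\alpha_2]$; reversing the collar direction on one side if necessary also aligns the co-orientations. Then $\mathcal{F}_1'$ and $\mathcal{F}_2'$ agree leaf by leaf on the identification torus $T$ and combine to a transversely continuous co-oriented foliation $\mathcal{F}$ on $M$, which can be smoothed without changing its topological type. Tautness of $\mathcal{F}$ should follow by piecing together closed transversals from either side: a leaf inside $M_i\setminus T$ inherits one from $\mathcal{F}_i'$, while a leaf meeting $T$ is crossed by a short transversal arc through $T$ that extends to a closed loop using tautness on both sides.

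I expect the main obstacle to be the straightening step. Carrying it out on an arbitrary CTF without introducing Reeb components or breaking co-orientation requires a precise model for $\mathcal{F}_i$ in a collar of $\partial M_i$ and a careful interpolation argument controlling how the non-compact boundary leaves extend into $M_i$. Once the straightening lemma is in place, the matching, gluing, and tautness verifications should follow formally.
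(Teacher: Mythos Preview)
This theorem is not proved in the present paper: it is quoted verbatim as Theorem~5.2 of \cite{boyer2021slope} and used as input for the converse direction, which is the paper's actual contribution. So there is no proof here to compare your proposal against; the paper only refers to the argument in \cite{boyer2021slope} (see Remark~\ref{rem:multigluing}), which indicates that the original proof proceeds by modifying the foliations $\mathcal{F}_i$ near the boundary and gluing, much as you outline.

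Your sketch is a reasonable blueprint for that argument, and you correctly locate the only nontrivial step in the boundary straightening. One remark on strategy: rather than straightening each $\mathcal{F}_i|_{\partial M_i}$ all the way to the \emph{linear} foliation of slope $[\alpha_i]$, the standard route (as in \cite{boyer2017foliations,boyer2021slope}) is to build a foliated $T^2\times I$ interpolating directly between the two given suspension foliations of the common rational slope. This avoids having to control the holonomy death precisely on each side and sidesteps the worry you raise about introducing Reeb components. Either way, the underlying fact is that two orientation-preserving circle homeomorphisms with the same rational rotation number are joined by a path of such homeomorphisms, so the suspension foliations on $T^2$ are concordant through suspensions; the collar you seek is the suspension of that path. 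Your tautness verification at the end is essentially correct, though for a leaf lying entirely in $M_i$ you should note (as in the paper's Lemma~\ref{lem:reebless}) that a closed transversal in $M_i$ may be rerouted along $T$ whenever it would exit, using that $\mathcal{F}|_T$ has no Reeb annuli.
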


The authors conjectured that the converse to this theorem should also be true (see \cite{boyer2021slope}, Conjecture 1.2). In this paper we affirm this conjecture. In fact, we show that:

\begin{thm}
    Suppose $M=M_1\cup_f M_2$ where $M_1,M_2$ are knot manifolds and $f:\partial M_1\xrightarrow{\cong} \partial M_2$. Then $M$ admits a co-oriented taut foliation if and only if $f$ identifies a rational CTF-detected slope on $\partial M_1$ with a rational CTF-detected slope on $\partial M_2$.
    \label{thm:main}
\end{thm}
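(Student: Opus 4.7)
The ``if'' direction is Theorem~\ref{thm:gluing}, so the content of Theorem~\ref{thm:main} is the converse. The plan is to start with a co-oriented taut foliation $\mathcal{F}$ on $M$, normalize it with respect to the torus $T := \partial M_1 = \partial M_2 \subset M$, and then extract from $\mathcal{F}$ a rational CTF-detected slope on each $M_i$ that is identified by $f$.

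The first step is to put $\mathcal{F}$ into standard position with respect to $T$. Since $T$ is an incompressible torus in the tautly foliated manifold $M$, a classical normal-form result (following work of Roussarie, Thurston, and Brittenham) says that after isotopy either $T$ is transverse to $\mathcal{F}$, or $T$ is a leaf of $\mathcal{F}$. In the transverse case, tautness combined with incompressibility of $T$ forces $\mathcal{F}|_T$ to admit a closed transversal, so $\mathcal{F}|_T$ is a suspension foliation of some slope $[\alpha] \in \mathrm{PH}_1(T;\mathbb{R})$. Restricting $\mathcal{F}$ to each $M_i$ then places $[\alpha]$ in $\mathcal{D}_{CTF}(M_i)$, and the $f$-identification is automatic because $T = \partial M_1 = \partial M_2$ in $M$.

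If $[\alpha]$ is rational we are done, so the main task is the irrational transverse case together with the leaf case. I would handle the leaf case by a standard spinning construction, replacing $T$ by two parallel transverse tori in a small collar and filling the intervening $T \times I$ with a suspension foliation of any chosen rational slope, using the freedom afforded by the holonomy of $T$ in $\mathcal{F}$. For the irrational transverse case, my plan is to cut $M$ along $T$ and, on each side, insert a twisting collar $\partial M_i \times [0,\epsilon]$ in which the foliation interpolates between a suspension of a chosen nearby rational slope $[\beta]$ at $\partial M_i$ and the original suspension of slope $[\alpha]$ deeper in. Regluing via $f$ would then produce a CTF on $M$ whose restriction to $T$ is a rational suspension of slope $[\beta]$, so that $M_1$ and $M_2$ both CTF-detect $[\beta]$ matched by $f$.

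The step I expect to be the main obstacle is verifying that the twisting modification in the irrational transverse case preserves tautness: tautness is a global property, so the local insertion of a twisting collar must be compatible with closed transversals through the modified region, and the allowable perturbation of the boundary slope may be constrained by the global dynamics of $\mathcal{F}|_{M_i}$. A cleaner alternative I would pursue if the explicit construction proves unwieldy is to establish an openness statement for $\mathcal{D}_{CTF}(M_i)$ at any irrational slope it detects, and then observe that $\mathcal{D}_{CTF}(M_1)$ and $f^{-1}(\mathcal{D}_{CTF}(M_2))$, each containing a neighborhood of $[\alpha]$ in $\mathrm{PH}_1(T;\mathbb{R})$, must share a rational slope.
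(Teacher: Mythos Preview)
Your proposal contains a genuine gap at the very first nontrivial step. You assert that once $T$ is transverse to $\mathcal{F}$, tautness of $\mathcal{F}$ and incompressibility of $T$ force $\mathcal{F}|_T$ to be a suspension foliation. This is false: the induced one-dimensional foliation on $T$ can perfectly well contain Reeb annuli even though $\mathcal{F}$ is taut and $T$ is incompressible. Tautness guarantees a closed transversal in $M$ through every leaf of $\mathcal{F}$, but that transversal has no reason to lie on $T$, and indeed there is no closed curve on $T$ transverse to $\mathcal{F}|_T$ crossing a Reeb annulus. When Reeb annuli are present, the restriction $\mathcal{F}|_{M_i}$ need not be taut (it can acquire dead-end components), so the slope is not obviously CTF-detected on either side. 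Handling this Reeb case is precisely where the paper spends most of its effort: Lemma~\ref{lem:pfoliar} (following Gabai) shows that a knot manifold whose taut foliation restricts to the boundary with Reeb annuli is persistently foliar, and the proof of Proposition~\ref{prop:rhs} uses the full JSJ decomposition to push the analysis down to Seifert pieces where Lemmas~\ref{lem:bcclosed}--\ref{lem:bcverti} apply. For $b_1(M)>0$ the paper avoids this via Gabai's sutured machinery (Proposition~\ref{prop:posbetti}). None of this structure appears in your plan.

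Two smaller points. First, the leaf case you discuss does not arise: $T$ is separating, so a torus leaf would contradict tautness; you can simply drop that case. Second, your alternative for the irrational case---proving that any irrationally detected slope lies in the interior of $\mathcal{D}_{CTF}(M_i)$ and then intersecting open neighbourhoods---is exactly the paper's strategy (Proposition~\ref{prop:knotirr}, via laminar branched surfaces), so that portion of your outline is on target. But the proposal as written does not get you to that stage, because the Reeb-annulus obstruction blocks the preceding step.
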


The outline of the proof is as follows. Suppose $M=M_1\cup_T M_2$ (where $T=\partial M_1=\partial M_2$) admits a co-oriented taut foliation. We will first show that there is some slope of $T$ detected in both $M_1$ and $M_2$, and then show that such a slope can always be chosen to be rational. In the first step we consider two cases separately: when $M$ is a rational homology sphere and when $M$ has positive first Betti number. In the former case we use torus decomposition, while in the latter case we apply Gabai's sutured manifold decompositions.

In the second step dealing with irrational slopes, we will actually prove that any irrational detected slope lies in the interior of the set of detected boundary slopes of a knot manifold (Proposition~\ref{prop:knotirr}). These results follow almost immediately from Li's work on laminar branched surfaces~\cite{li2002laminar}\cite{li2003boundary}, and might be of independent interest. More generally, we can obtain a generalisation to~\cite{boyer2017foliations}, Proposition 6.10, which deals with irrational slopes of graph manifolds.

The authors of~\cite{boyer2021slope} also proved a multislope version of the gluing theorem (see \cite{boyer2021slope}, Theorem 7.6) and conjectured its converse. Slightly modifying our proof to do the above two steps simultaneously, we also provide an affirmative answer to this (multislope version) conjecture for CTF-detections.

\begin{rem}
    In the proof of~\cite{boyer2021slope}, Theorem 7.6, the authors actually proved a relative version of Theorem~\ref{thm:gluing}. That is, $M_1$ and $M_2$ can have multiple boundary tori; moreover, modifying the foliations in $M_1$ and $M_2$ and gluing them together along some boundary torus won't affect the slopes detected by the foliations on other boundary tori. We will essentially use this fact in this paper.
    ~\label{rem:multigluing}
\end{rem}

\hspace*{\fill}

\textbf{Organisation of the paper} In section~\ref{sec:2} we find a slope of $T$ detected in both $M_1,M_2$ for $M$ a rational homology sphere. In section~\ref{sec:3} we do this for $M$ with positive first Betti number. In section~\ref{sec:4} we briefly review the laminar branched surface theory and deal with irrational slopes. In section~\ref{sec:5} we discuss the converse in multislope settings.

\hspace{6pt}

\textbf{Acknowledgements} The author thanks his advisor Tao Li for many helpful conversations, and especially for pointing out the relation between irrational detected slopes and laminar branched surfaces. We thank Yaoping Xie for helpful discussions on some technical details in the paper. We thank the referee for their careful reading of the paper, and many valuable suggestions and corrections that improved the overall exposition.

\section{Torus decomposition and rational homology spheres}
\label{sec:2}

In this section we prove the converse to Theorem~\ref{thm:gluing} when $M$ is a rational homology sphere. The idea is to decompose the total manifold $M$ into JSJ pieces, where we are able to restrict taut foliations to these pieces, and finally discuss within graph manifolds. To avoid technical issues in Section~\ref{sec:5}, we will use the following weakened condition:

\begin{defn}
    Let $M$ be a compact, connected, orientable, and irreducible 3-manifold possibly with toral boundary. We say $M$ is \textit{separating} if
    \begin{enumerate}
        \item every embedded torus in $M$ is separating in $M$, and
        \item any connected graph or hyperbolic 3-submanifold $N\subset M$ consisting of JSJ pieces of $M$ can be embedded in a rational homology sphere.
    \end{enumerate}
    \label{defn:sep}
\end{defn}

It follows immediately that if $M$ is a rational homology sphere, then it is separating. Hence it suffices to prove the following proposition:

\begin{prop}
    Let $M$ be a closed separating 3-manifold, and $T\subset M$ an essential torus. Suppose $T$ cuts $M$ into two pieces $M_1,M_2$, such that $M=M_1\cup_{T}M_2$. Suppose $M$ admits a co-oriented taut foliation $\mathcal{F}$. Then there is a slope $[\alpha]$ of the torus $T$, such that it is CTF-detected as boundary slopes of both pieces $M_1$ and $M_2$.
    \label{prop:rhs}
\end{prop}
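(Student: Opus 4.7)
The plan is to exploit the JSJ decomposition of $M$: first I would arrange $\mathcal{F}$ to be compatible with the JSJ tori, then realize the required CTF on each of $M_1,M_2$ by assembling CTFs on the individual JSJ pieces using the relative version of Theorem~\ref{thm:gluing} noted in Remark~\ref{rem:multigluing}.

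The first step is to isotope $\mathcal{F}$ so that it becomes transverse to every JSJ torus of $M$. This is a standard move, after possibly splitting along a torus leaf and reinserting an $I$-bundle neighborhood in the spirit of Gabai-Oertel, and yields on each JSJ piece $P$ a co-oriented taut foliation whose induced foliation on every component of $\partial P$ is a suspension; in particular, a common CTF-detected slope is realized on each JSJ torus from both adjacent pieces. After this step I would split into two cases according to the position of $T$.

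\emph{Case 1: $T$ is isotopic to some JSJ torus $T_0$.} Then the suspension slope $[\alpha]$ induced on $T_0$ is a candidate. To see that $[\alpha]$ is CTF-detected as a boundary slope of each $M_i$ (which is itself a union of JSJ pieces of $M$ meeting along JSJ tori interior to $M_i$), I would iterate Theorem~\ref{thm:gluing} across those internal JSJ tori. The slopes match automatically, since the foliations on adjacent pieces came from the single ambient foliation $\mathcal{F}$; the separating hypothesis of Definition~\ref{defn:sep} is used precisely here, as it guarantees that each union of JSJ pieces appearing in this inductive gluing embeds in a rational homology sphere, which is what is needed to invoke the gluing theorem.

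\emph{Case 2: $T$ is not isotopic to any JSJ torus.} Since a hyperbolic JSJ piece contains no essential non-peripheral tori, $T$ lies (after isotopy) inside a Seifert fibered JSJ piece $N$ and is either vertical or horizontal with respect to its Seifert fibration. The key step is to further isotope $\mathcal{F}|_N$, while preserving its suspension behavior on $\partial N$, so that it becomes transverse to $T$. This uses the structure theory of taut foliations on Seifert fibered spaces (horizontal branched surfaces, or the Eisenbud-Hirsch-Neumann/Jankins-Neumann type picture): up to isotopy a CTF on $N$ can be made either transverse to the Seifert fibration, or tangent to it along torus leaves in a way that can be perturbed to give a suspension foliation on any prescribed vertical torus. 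Either way I obtain a suspension slope on $T$, and then combine with the same iterated application of Theorem~\ref{thm:gluing} as in Case 1 to produce CTFs on $M_1$ and $M_2$ realizing a common slope on $T$.

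The main obstacle I would expect to spend effort on is Case 2: modifying $\mathcal{F}|_N$ to be transverse to $T$ without disturbing the suspension foliations on $\partial N$. Everything else is essentially a bookkeeping argument: once transversality to every JSJ torus and to $T$ is achieved, the separating condition of Definition~\ref{defn:sep} and the relative gluing theorem assemble the pieces into CTFs on $M_1$ and $M_2$ with a common boundary slope on $T$.
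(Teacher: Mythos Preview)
Your first step contains the principal gap. You assert that after isotoping $\mathcal{F}$ transverse to the JSJ tori, the induced one-dimensional foliation on each JSJ torus is a suspension. This is not what Brittenham's theorem gives: one obtains only transversality and the fact that the restriction to each piece is an essential lamination. The foliation $\mathcal{F}|_{T_0}$ induced on a JSJ torus $T_0$ may very well contain Reeb annuli, and handling this is the bulk of the paper's proof. When $T$ bounds a hyperbolic piece and $\mathcal{F}|_T$ has Reeb annuli, the paper uses Lemma~\ref{lem:essann} (no annulus leaf in a hyperbolic piece, so the restriction stays taut) together with Lemma~\ref{lem:pfoliar} to conclude that the hyperbolic side is persistently foliar, and then matches against a longitudinal slope on the other side. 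When $T$ lies in a maximal graph submanifold $G$, the paper first replaces $\mathcal{F}$ by a new taut foliation on $G$ meeting $\partial G$ in suspensions of rational slopes, via an iterated application of Gabai's sutured manifold machinery to successive truncations $G_1\supset G_2\supset\cdots\supset G_r$; only after this replacement does the Brittenham--Roberts argument inside $G$ force Reebless intersections with the JSJ tori of $G$.

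There is also a secondary problem in your Case~1. You propose to iterate Theorem~\ref{thm:gluing} across the JSJ tori interior to $M_i$, but that theorem requires \emph{rational} CTF-detected slopes, and the suspension slopes coming from $\mathcal{F}$ need not be rational. The paper sidesteps this entirely: once a foliation of $M$ meets $T$ in a Reebless foliation, Lemma~\ref{lem:reebless} shows that the restriction to each $M_i$ is already taut, so no gluing is required.

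Finally, your Case~2 sketch (``perturb to give a suspension foliation on any prescribed vertical torus'') is too optimistic. The paper's treatment here is a genuine case analysis using Lemmas~\ref{lem:bcclosed}--\ref{lem:bcverti}: one distinguishes whether $\mathcal{F}_r|_S$ is horizontal, how many vertical slopes occur on $\partial S$, and whether the base orbifold is orientable. In several subcases the conclusion is not that one can make $\mathcal{F}$ transverse to $T$, but rather that one side of $T$ is persistently foliar, and one then matches against a longitudinal slope of the other side.
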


We will first make some observations and prepare some lemmas, and then proceed with the proof of Proposition~\ref{prop:rhs}.

\subsection{Observations and preparing lemmas} 

Since we want to find taut foliations in the submanifolds $M_1,M_2$, the most natural idea is to restrict the taut foliation $\mathcal{F}$ of $M$. However, such restrictions may not always be taut. Our first observation is that the only obstruction to tautness under restrictions is the Reeb annuli on the boundary:

\begin{lem}
    Let $M$ be a separating manifold, and $M'\subset M$ be an embedded 3-submanifold with toral boundary, such that components of $\partial M'$ are either components of $\partial M$ or lie in the interior of $M$. Suppose $M$ admits a co-oriented taut foliation $\mathcal{F}$ intersecting $\partial M$ transversely, and that $\mathcal{F}$ intersects $\partial M'-\partial M$ only in suspensions (i.e. no Reeb annulus). Then the restriction $\mathcal{F}|_{M'}$ is a co-oriented taut foliation of $M'$.
    \label{lem:reebless}
\end{lem}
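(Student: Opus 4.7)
The plan is to invoke Sullivan's cohomological characterization of tautness: a co-oriented codimension-one foliation on a compact 3-manifold (with boundary transverse to the foliation, when applicable) is taut if and only if the manifold carries a closed $2$-form that restricts positively to every leaf. Granting this, the argument reduces to restricting such a $2$-form from $M$ to $M'$ and verifying the boundary behaviour of $\mathcal{F}|_{M'}$.

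First I would check that $\mathcal{F}|_{M'}$ is a co-oriented foliation transverse to $\partial M'$. The co-orientation is inherited trivially from that of $\mathcal{F}$. For transversality, the components of $\partial M'$ sitting in $\partial M$ meet $\mathcal{F}$ transversely by hypothesis, while the components lying in the interior of $M$ meet $\mathcal{F}$ in suspension foliations, which by definition requires the ambient foliation to be transverse to the torus. I would then apply Sullivan's theorem to $(M, \mathcal{F})$ to obtain a closed $2$-form $\omega$ on $M$ with $\omega|_L > 0$ on each leaf $L$ of $\mathcal{F}$. Its pullback $\omega|_{M'}$ is closed (exterior differentiation commutes with restriction) and remains positive on each leaf of $\mathcal{F}|_{M'}$, since every such leaf sits inside a leaf of $\mathcal{F}$ where $\omega$ is already positive. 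Applying Sullivan's characterization in the reverse direction to $(M', \mathcal{F}|_{M'})$ then yields tautness.

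The subtle point---and the reason the no-Reeb-annulus hypothesis is essential rather than cosmetic---is to ensure Sullivan's characterization actually applies to $(M', \mathcal{F}|_{M'})$. Were $\mathcal{F}|_{\partial M' - \partial M}$ to contain a Reeb annulus, one could build a dead-end region in $M'$ adjacent to that annulus, bounded by closed leaves of the restricted foliation together with the annulus itself, which would trap any transverse flow attempting to escape $M'$. Such a dead-end would obstruct tautness even in the presence of a closed positive $2$-form, since in $M$ the trapped flow could always escape through $M - M'$---no such escape is available once we restrict. The suspension hypothesis rules out this scenario: it ensures each boundary torus of $\partial M' - \partial M$ carries a closed curve transverse to its induced $1$-foliation, providing a collar through which transverse flow can pass freely. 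I expect the main obstacle in writing the proof out cleanly to be invoking Sullivan's theorem with correctly stated hypotheses in the manifold-with-boundary setting; once that is in hand, the restriction argument itself is routine.
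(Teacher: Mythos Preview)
Your approach via Sullivan's closed $2$-form characterization is genuinely different from the paper's, but as written it has a real gap, located exactly where you sense it. The direction of Sullivan's criterion you need on $M'$ --- existence of a closed $2$-form positive on leaves $\Rightarrow$ taut --- does not hold for manifolds with boundary without further control of $\omega$ along $\partial M'$. Concretely, if $D\subset M'$ were a dead-end region, Stokes' theorem gives
\[
0=\int_D d\omega=\int_{\partial D\cap\{\text{leaves}\}}\omega+\int_{\partial D\cap\partial M'}\omega,
\]
and while the first integral is positive, the second --- the integral of the restricted $\omega$ over a piece of a boundary torus --- has no reason to vanish or to carry a definite sign. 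So the restricted $2$-form by itself does not rule out dead ends in $M'$. Your paragraph about collars and transverse flow does not connect back to the $2$-form argument; it is a correct intuition about why suspensions are better than Reeb annuli, but it is not a proof that Sullivan's converse applies. Note also that you never invoke the \emph{separating} hypothesis on $M$, which the paper uses essentially; if your argument worked as stated it would prove a strictly stronger statement.

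The paper's proof is elementary and direct: it builds a closed transversal through every leaf of $\mathcal{F}|_{M'}$. For a leaf meeting some $T\subset\partial M'-\partial M$, the suspension hypothesis supplies a closed transversal on $T$ itself. For a leaf $G$ disjoint from $\partial M'-\partial M$, take a closed transversal $\gamma$ through $G$ in $M$; since each such $T$ is separating in $M$, the first exit and last entry of $\gamma$ through $T$ lie on the same torus, and the suspension hypothesis furnishes a transverse arc on $T$ (compatible with the co-orientation) joining them. Replacing each excursion of $\gamma$ out of $M'$ by such an arc yields, after finitely many steps, a closed transversal to $\mathcal{F}|_{M'}$ through $G$ lying entirely in $M'$. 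If you want to salvage the Sullivan route, you would need either a boundary-relative form of the criterion (e.g.\ arranging $\omega|_{\partial M'}$ to be tangent-positive as well, which requires modifying $\omega$ near $\partial M'$) or an independent argument that no dead-end region can meet $\partial M'-\partial M$ when the induced foliation there is a suspension --- at which point you are essentially redoing the paper's argument by other means.
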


\begin{proof}
    First suppose we have a leaf $F$ of $\mathcal{F}|_{M'}$ which intersects $\partial M'-\partial M$. Then by the Reebless supposition there are transverse closed circles intersecting $F$ on some components of $\partial M'-\partial M$. Now suppose we have a leaf $G$ of $\mathcal{F}|_{M'}$ not intersecting $\partial M'-\partial M$. Then it is also a leaf of $\mathcal{F}$, and we can find a transverse closed circle $\gamma$ in $M$ intersecting $G$. We parameterize $\gamma$ as $\gamma:[0,1]\rightarrow M$, such that $\gamma(0)=\gamma(1)\in G$, and that its orientation agrees with the co-orientation of $\mathcal{F}$. $\gamma$ may leave $M'$ through some component of $\partial M'-\partial M$. However, since $M$ is separating, each component of $\partial M'-\partial M$ is separating in $M$. Hence for each component $T$ of $\partial M'-\partial M$ intersecting $\gamma$, the preimage $\gamma^{-1}(T)$ has both a minimum $t_1$ and a maximum $t_2$, where $\gamma$ exits $\mathrm{int}(M')$ at $t_1$ and enters $\mathrm{int}(M')$ at $t_2$. Since $\mathcal{F}$ intersects $\partial M'-\partial M$ only in suspensions, we can replace the $\gamma$-arc between $t_1$ and $t_2$ by an arc on $T$ transverse to $\mathcal{F}$, complying with the co-orientation. After finitely many such replacements, we will get a transverse closed circle $\gamma'$ in $M'$ intersecting $G$.
\end{proof}

Our next observation is from~\cite{gabai1992taut}. Gabai has shown that if a knot manifold $M$ admits a taut foliation intersecting $\partial M$ transversely in a foliation with Reeb annuli, then any rational slope other than the slope represented by the Reeb annuli is strongly CTF-detected (\cite{gabai1992taut}, Corollary 3.1). When we only require the weaker condition of being CTF-detected, i.e. suspension foliations on the boundary, then actually all rational slopes are CTF-detected (we call such a knot manifold $M$ \textbf{persistently foliar}). Below we rewrite the conclusion in slope detection settings and give a slightly modified proof that works in our case:

\begin{lem}
    Let $M$ be a knot manifold. Suppose $M$ admits a co-oriented taut foliation $\mathcal{F}$ that intersects $\partial M$ transversely, while $\mathcal{F}|_{\partial M}$ contains some Reeb annuli. Then every rational slope of $\partial M$ is CTF-detected, i.e. $M$ is persistently foliar.
    \label{lem:pfoliar}
\end{lem}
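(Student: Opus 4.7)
My plan is to split the argument according to whether the target rational slope equals the slope $[\beta]$ of the closed boundary leaves of the Reeb annuli of $\mathcal{F}|_{\partial M}$ (well-defined since any two closed leaves of a codimension-one foliation on $T^2$ are parallel). The Reeb slope $[\beta]$ itself will be CTF-detected by the given $\mathcal{F}$ directly, while other rational slopes are obtained via Gabai's construction from \cite{gabai1992taut}.

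I first claim that $\mathcal{F}$ witnesses CTF-detection of $[\beta]$. Pick any simple closed curve $\gamma \subset \partial M$ whose slope is distinct from $[\beta]$, and after a small isotopy make $\gamma$ transverse to $\mathcal{F}|_{\partial M}$. The first return map of $\mathcal{F}|_{\partial M}$ along $\gamma$ is a well-defined self-homeomorphism of $\gamma$: its fixed points correspond to the closed leaves of slope $[\beta]$, and on each interval of $\gamma$ between consecutive fixed points — corresponding to either a Reeb annulus or a product annular component in the complement of the closed leaves — the return map is a homeomorphism fixing both endpoints. Hence $\mathcal{F}|_{\partial M}$ is a topological suspension foliation of slope $[\beta]$, matching Definition~\ref{defn:ctf}.

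For a rational slope $[\alpha] \neq [\beta]$, I would apply Gabai's construction from \cite{gabai1992taut}, Corollary 3.1. The construction uses the Reeb annulus on $\partial M$ as the site for a controlled modification of $\mathcal{F}$ in a collar of $\partial M$: leaves spiralling into the Reeb annulus are deformed so as to exit the collar along a slope-$[\alpha]$ linear foliation on $\partial M$, while remaining transverse to a fixed vector field witnessing tautness. The resulting foliation $\mathcal{F}'$ of $M$ is taut and strongly CTF-detects $[\alpha]$; in particular it CTF-detects $[\alpha]$.

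The main technical point will be verifying that this spiralling modification preserves tautness globally and does not introduce new Reeb components in the interior of $M$. Gabai's original argument handles this without essential change. The only new content relative to Corollary~3.1 is the inclusion of the Reeb slope $[\beta]$ itself among detected slopes, which is handled by the observation above, exploiting that CTF-detection is strictly weaker than strong CTF-detection and is already satisfied by the initial foliation.
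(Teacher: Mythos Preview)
Your handling of slopes $[\alpha]\neq[\beta]$ via \cite{gabai1992taut} is correct and matches the paper. The genuine gap is in your treatment of the Reeb slope $[\beta]$ itself.

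The claim that $\mathcal{F}$ already CTF-detects $[\beta]$ is false: a foliation of $T^2$ containing a Reeb annulus is, by definition, \emph{not} a suspension foliation, so $\mathcal{F}|_{\partial M}$ does not satisfy Definition~\ref{defn:ctf}. Concretely, the step ``after a small isotopy make $\gamma$ transverse to $\mathcal{F}|_{\partial M}$'' cannot be carried out for any $\gamma$ of slope different from $[\beta]$. Such a $\gamma$ must cross every Reeb annulus, but in a Reeb annulus the co-orientation points into the annulus along both boundary circles (or out along both); along any arc crossing the annulus the co-orientation would therefore have to flip from agreeing with the arc's direction to disagreeing with it, which is impossible if the arc is everywhere transverse to the foliation. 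Hence no closed transversal of slope $\neq[\beta]$ exists, there is no first-return map, and $\mathcal{F}|_{\partial M}$ is not the suspension of any circle homeomorphism. The presence of Reeb annuli is exactly the obstruction to being a suspension, not a feature compatible with it.

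This is precisely why the paper does additional work for the slope $[\beta]$: after blowing up leaves and performing the inverse of scalloping to convert the Reeb annuli into boundary-parallel annulus leaves, one attaches a collar $T^2\times I$, cuts it along an annulus of slope $[\beta]$, foliates the resulting ideal polygon bundle, and fills in the remaining product holes. The output is a \emph{new} taut foliation of $M$ whose restriction to $\partial M$ is a suspension of slope $[\beta]$; the original $\mathcal{F}$ does not suffice.
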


\begin{proof}
    After possibly blowing up leaves intersecting $\partial M$ we can assume no two Reeb annuli share a boundary circle. Let $R_1, R_2,...,R_{2n}$ be the Reeb annuli of $\mathcal{F}|_{\partial M}$ in cyclic order, and $L_i,M_i$ be the two boundary circles of $R_i$ such that $L_1, M_1, L_2, M_2,...$ are in cyclic order. We can then further blow up the leaves of $\mathcal{F}$ corresponding to the circles $L_{2i-1},M_{2i-1}$ ($i=1,...,n$) to $I$-bundles. We still use $L_i,M_i$ ($i=1,...,2n$) to denote the boundary circles of $R_i$ after the blowing up, and further use $L_{2i-1}',M_{2i-1}'$ ($i=1,...,n$) to denote the other boundary circles of the annuli obtained by blowing up $L_{2i-1},M_{2i-1}$, such that locally $L_{2i-1}',L_{2i-1},M_{2i-1},M_{2i-1}'$ are in cyclic order, see the upper left of Figure~\ref{fig:pfoliar}. 

    \begin{figure}[!hbt]
        \begin{overpic}[scale=0.4]{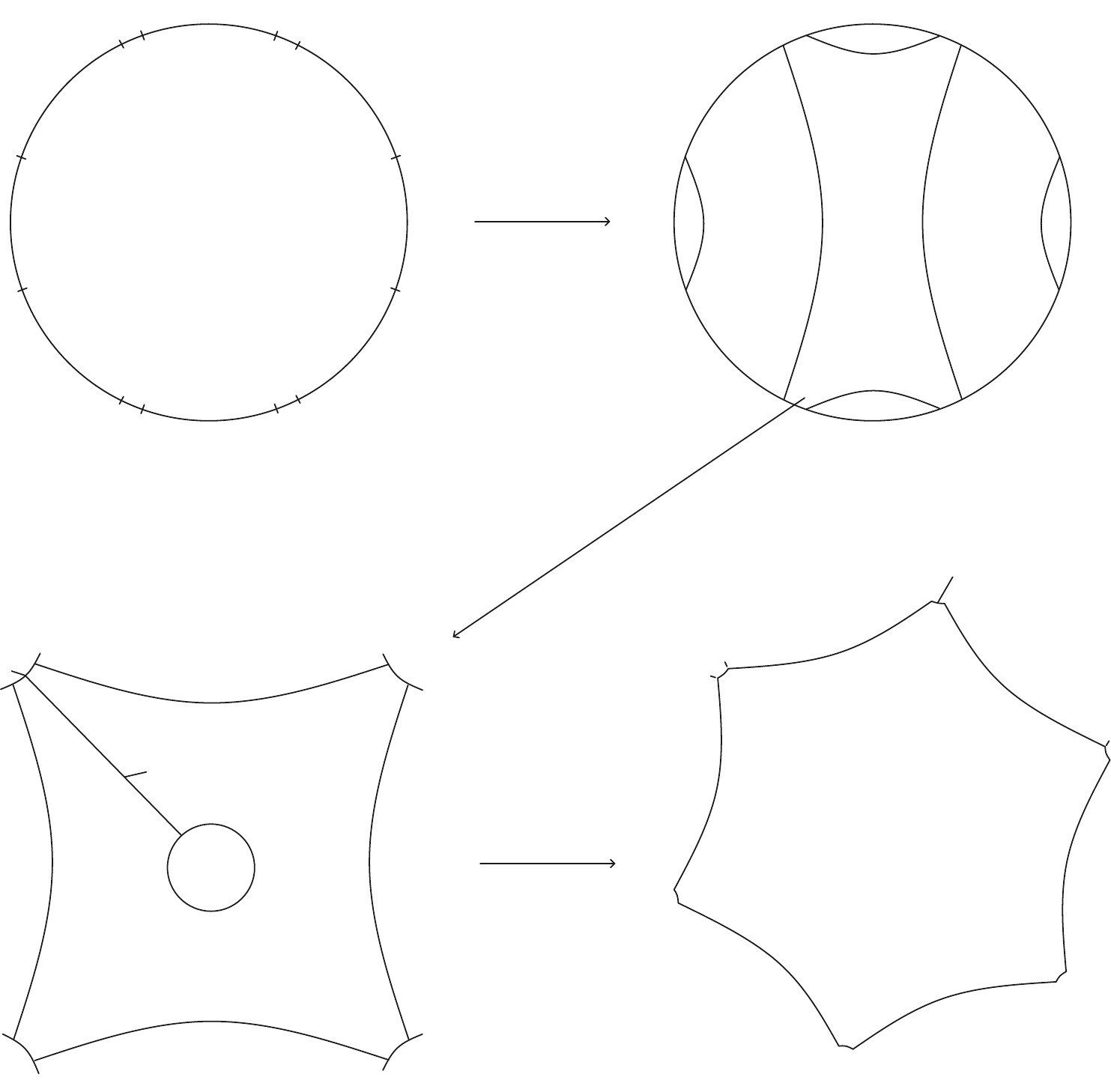}
            \put(17,93){$R_1$}
            \put(6,95){$L_1'$}
            \put(11,96.5){$L_1$}
            \put(22,96.5){$M_1$}
            \put(27,95){$M_1'$}
            \put(32,77){$R_2$}
            \put(36.3,84){$L_2$}
            \put(36,70){$M_2$}
            \put(17,62){$R_3$}
            \put(27.5,60){$L_3'$}
            \put(24,58){$L_3$}
            \put(11,58){$M_3$}
            \put(6,59.5){$M_3'$}
            \put(2,77){$R_4$}
            \put(-2,70){$L_4$}
            \put(-3.5,84){$M_4$}
            \put(77,90){$A_1$}
            \put(66,95){$L_1'$}
            \put(71,96.5){$L_1$}
            \put(82,96.5){$M_1$}
            \put(87,95){$M_1'$}
            \put(89,77){$A_2$}
            \put(83.6,80){$A_2'$}
            \put(76,64){$A_3$}
            \put(87.5,60){$L_3'$}
            \put(64,77){$A_4$}
            \put(69.5,80){$A_4'$}
            \put(17,36.3){$A_1$}
            \put(34,20){$A_2'$}
            \put(17,7.5){$A_3$}
            \put(0,20){$A_4'$}
            \put(13,28){$N$}
            \put(-1,38){$C'$}
            \put(16,20){$C''$}
            \put(22,15){$T''$}
            \put(36.8,32){$T'$}
            \put(14,0){$T^2\times I$}
            \put(71,0){$T^2\times I|N\times I$}
            \put(84,47){$T''|C''$}
            \put(91,36.3){$N^+$}
            \put(99,32){${C'}^+$}
            \put(100,28){$L_1$}
            \put(96.2,19){$A_1$}
            \put(82,10){$A_2'$}
            \put(66,10){$A_3$}
            \put(64.3,25){$A_4'$}
            \put(72,40){$N^-$}
            \put(64,39){${C'}^-$}
            \put(60,37){$L_1'$}
        \end{overpic}
        \caption{Persistent foliar decoration}
        \label{fig:pfoliar}
    \end{figure}
    
    Now we can perform the inverse of scalloping (see~\cite{gabai1992taut} Operation 2.3.1 or~\cite{calegari2007foliations} Example 4.20) to close up these Reeb annuli into annuli leaves that are boundary parallel to the original boundary torus. Let $A_i$ ($i=1,...,2n$) be the annulus leaf obtained by closing up $R_i$, so that $L_i$ and $M_i$ are connected by $A_i$ and lie in the same leaf in the new foliation.

    Now we further connect $M_{2i-1}'$ to $L_{2i+1}'$ by an annulus $A_{2i}'$ parallel to $A_{2i}$ for $i=1,...,n$. As a result, we obtain a new foliation $\mathcal{F'}$, with $n$ thickened annuli ``holes'' bounded by $A_{2i}$ and $A_{2i}'$ ($i=1,...,n$), and a new boundary torus $T'$ made up of $A_{2i-1},A_{2i}'$ and the blow-ups of $L_{2i-1}, M_{2i-1}$ ($i=1,...,n$), see Figure~\ref{fig:pfoliar} upper right.

    \cite{gabai1992taut} has already shown that any rational slope other than the slope represented by $L_1$ is strongly foliation-detected. In fact, we can glue an ideal polygon bundle over $S^1$ to $T'$ at the desired slope (i.e. the boundary of ideal polygon on $T'$ is of that slope), then by~\cite{calegari2007foliations} Example 4.22 we can foliate the ideal polygon bundle and extend to $\mathcal{F'}$. Then we can fill in the thickened annuli holes by~\cite{gabai1992taut} Operation 2.4.4. This gives a co-oriented taut foliation of the Dehn filling along the desired slope.

    We still need to prove the weak detection of the slope represented by $L_1$. The idea is similar. We can attach a thickened torus $T^2\times I$ to $T'$, and denote the other boundary torus $T''$. Now we can pick a circle $C'$ of $\mathcal{F'}|_{T'}$ in the interior of the blow up of $L_1$, and then an annulus $N=C\times I\subset T^2\times I$ bounded by $C'\subset T'$ and $C''\subset T''$. After cutting $T^2\times I$ along $N$, we can actually regard $T^2\times I|N$ as an ideal polygon bundle where the ideal polygon has $2n+2$ edges corresponding to the annuli $A_{2i-1},A_{2i}'$ ($i=1,...,n$) and the two sides of $N$, see bottom two pictures in Figure~\ref{fig:pfoliar}. We can then foliate this ideal polygon bundle and extend to $\mathcal{F'}$, and then fill in the thickened annuli holes. The resulting foliation is transverse to $T''$, and intersects $T''$ in a suspension foliation of the slope $[C'']$. Since $T''$ is parallel to the original boundary torus of $M$ and $C''$ is parallel to $L_1$, we got the slope detection desired.
\end{proof}

\begin{rem}
    The new foliation $\mathcal{F}'$ intersecting $\partial M$ in a suspension of desired slope can be obtained by blowing up leaves of the original foliation $\mathcal{F}$, filling in the $I$-product spaces with transverse leaves with possibly non-trivial holonomy, and then attaching a properly foliated $T^2\times I$ to the boundary. So for any essential torus $T^*$ in $M$, if $\mathcal{F}$ intersects $T^*$ in a suspension foliation, then $\mathcal{F}'$ would intersect $T^*$ in a suspension of the same slope, possibly bringing in new holonomies. This description is used in section~\ref{sec:5}.
    \label{rem:localpf}
\end{rem}

The next lemma gives a fundamental criterion for tautness under restriction and is well-known (see~\cite{boyer2021slope} Corollary 5.4).

\begin{lem}
    Let $M$ be a separating manifold. Suppose $M$ admits a co-oriented taut foliation $\mathcal{F}$ intersecting $\partial M$ transversely. Let $M_1\subset M$ be an embedded submanifold with boundary components that are either toral components of $\partial M$ or incompressible tori in the interior of $M$. If $\mathcal{F}|_{M_1}$ is not taut, then it contains an annulus leaf.
    \label{lem:essann}
\end{lem}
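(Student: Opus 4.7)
I would argue by contraposition: assume that $\mathcal{F}|_{M_1}$ contains no annulus leaf, and deduce that $\mathcal{F}|_{M_1}$ is taut. After a small isotopy supported near $\partial M_1$ we may assume $\mathcal{F}$ is transverse to every interior torus component of $\partial M_1$, so $\mathcal{F}|_{M_1}$ is a well-defined foliation of $M_1$ transverse to $\partial M_1$. By the contrapositive of Lemma~\ref{lem:reebless}, tautness of $\mathcal{F}|_{M_1}$ will follow once I show that $\mathcal{F}$ meets each component of $\partial M_1 - \partial M$ in a suspension foliation. So suppose, for contradiction, that some such component $T \subset \partial M_1 - \partial M$ carries a Reeb annulus $R$ of $\mathcal{F}|_T$.

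Fix such an $R$, with boundary circles $c_1$ and $c_2$. These are closed leaves of $\mathcal{F}|_T$; because the boundaries of a Reeb annulus are essential in $T$ and $T$ is incompressible in $M_1$, each $c_i$ is essential in $M_1$. Let $L^*$ denote the leaf of $\mathcal{F}$ containing $c_1$, and let $L$ be the component of $L^* \cap M_1$ containing $c_1$. Then $L$ is a leaf of $\mathcal{F}|_{M_1}$ with $c_1 \subset \partial L$, and $L$ cannot be a disk since $c_1$ is essential. The goal is to show that $L$ is in fact a compact annulus, directly contradicting the standing assumption.

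The decisive input is the Reeb structure along $c_1$: it forces the $\mathcal{F}$-holonomy around $c_1$ to be attracting from the $R$-side, so nearby leaves of $\mathcal{F}|_T$ in $R$ spiral into $c_1$. Translating this to the three-dimensional picture and tracing those leaves back into $M_1$, together with tautness of $\mathcal{F}$ in $M$ and the separating hypothesis, a dead-end-component analysis in the style of Haefliger/Goodman identifies $L$ as a compact leaf all of whose boundary circles on $T$ are parallel to $c_1$; an Euler-characteristic count then pins $L$ down as an annulus, whose second boundary circle is $c_2$ (the other end of $R$). The main obstacle is exactly this last topological control of $L$ -- ruling out non-compactness or higher genus -- and it is here that the separating hypothesis on $M$ (preventing $L$ from escaping along a non-separating torus), the incompressibility of $T$, and the attracting-holonomy constraint at $c_1$ all enter in an essential way.
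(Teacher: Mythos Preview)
Your reduction through Lemma~\ref{lem:reebless} is a logical overreach. That lemma gives the one-way implication ``no Reeb annulus on $\partial M_1-\partial M$ $\Rightarrow$ $\mathcal{F}|_{M_1}$ taut'', so to run your contraposition you would need ``Reeb annulus on $\partial M_1-\partial M$ $\Rightarrow$ $\mathcal{F}|_{M_1}$ has an annulus leaf''. This implication is false. There are taut foliations on knot manifolds with Reeb annuli on the boundary and no annulus leaf at all; indeed, this is exactly the situation exploited in the proof of Proposition~\ref{prop:rhs}, where $M_1$ contains a hyperbolic piece adjacent to $T$, the restriction $\mathcal{F}|_{M_1}$ is taut with Reeb boundary, and the absence of essential annuli in the hyperbolic piece rules out any annulus leaf. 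In that scenario your argument would reach a Reeb annulus with no contradiction available. Your sketch that the specific leaf $L$ through $c_1$ must be a compact annulus is therefore not salvageable: attracting holonomy along $c_1$ says nothing about compactness or genus of $L$, and $L$ can perfectly well be non-compact or have negative Euler characteristic.

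The paper's argument avoids Lemma~\ref{lem:reebless} entirely and is direct. If $\mathcal{F}|_{M_1}$ is not taut, it has a dead-end component $D\subset M_1$; then $\chi(\partial D)=0$, and $\partial D$ is made up of leaves of $\mathcal{F}|_{M_1}$ together with pieces of the toral boundary $\partial M_1$. Incompressibility of $\partial M_1$ and tautness of $\mathcal{F}$ on $M$ rule out disk and sphere leaves, so every leaf in $\partial D$ has non-positive Euler characteristic; since the total is zero, each is a torus or an annulus. A torus leaf would be a leaf of $\mathcal{F}$ itself and, by the separating hypothesis, would separate $M$, contradicting tautness of $\mathcal{F}$. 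Hence some leaf in $\partial D$ is an annulus. The point is that the annulus leaf is found among the boundary leaves of the dead-end component, not by analysing the leaf through the Reeb circle.
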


\begin{proof}
    For $\mathcal{F}|_{M_1}$ to be not taut there must be a dead end component. The boundary of this dead end component consists of leaves of $\mathcal{F}|_{M_1}$ and has Euler characteristic 0, thus must consist of tori and annuli. There cannot be any torus leaf since it will be separating while $\mathcal{F}$ is taut, so there must be some annulus leaf.
\end{proof}

Last but not least, to discuss slope detections in graph manifolds, we quote the following lemmas from~\cite{boyer2017foliations}:

\begin{lem}[\cite{boyer2017foliations}, Proposition 6.1(1)]
    Suppose $W$ is a Seifert manifold admitting a co-oriented taut foliation $\mathcal{F}$. If $W$ is a rational homology sphere, then $\mathcal{F}$ can be isotoped to be horizontal, and the base orbifold of $W$ has underlying space a 2-sphere.
    \label{lem:bcclosed}
\end{lem}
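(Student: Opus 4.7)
My plan is to establish the two conclusions of the lemma separately.

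For the claim that the base orbifold has underlying space $S^2$, I would use the standard presentation of the fundamental group of a closed orientable Seifert fibered space $W$ over a base of genus $g$ with $n$ cone points of multiplicities $\alpha_j$. The handle generators $a_i,b_i$ ($i=1,\dots,g$) contribute a free $\mathbb{Z}^{2g}$ summand to $H_1(W;\mathbb{Z})$ upon abelianisation. Since $W$ is a rational homology sphere, $H_1(W;\mathbb{Q})=0$, forcing $g=0$; a parallel computation using the relation $v_i h v_i^{-1}=h^{-1}$ handles the non-orientable base case and again forces the underlying genus to vanish, so the base orbifold is supported on $S^2$.

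For the horizontal conclusion, I would invoke Brittenham's structural theorem on essential laminations in Seifert fibered spaces: after isotopy, any essential lamination, and in particular any co-oriented taut foliation $\mathcal{F}$, decomposes into a vertical sublamination consisting of vertical tori together with a horizontal sublamination transverse to the Seifert fibration. The task is then to eliminate the vertical part. Any vertical torus leaf $T$ is an embedded closed orientable surface in $W$; by Poincar\'e duality and universal coefficients, $H_2(W;\mathbb{Z})\cong\mathrm{Hom}(H_1(W;\mathbb{Z}),\mathbb{Z})=0$, so $T$ separates $W$ into two Seifert fibered pieces $W_1,W_2$. Cutting $\mathcal{F}$ along $T$ produces taut foliations of each $W_i$ with $T$ as a boundary leaf, and I would then induct on the number of vertical torus leaves, applying the Brittenham machinery relatively to each piece and reassembling along $T$.

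The hardest step will be the reassembly in the inductive reduction: the horizontal foliations on $W_1$ and $W_2$ must be glued along $T$ in a way compatible with both slope and holonomy. One handles this using that $T$ is vertical, so both pieces inherit compatible product neighbourhoods of $T$, together with the classification of horizontal foliations on Seifert pieces with toral boundary (Eisenbud--Hirsch--Neumann, Jankins--Neumann, Naimi, Brittenham), which provides enough flexibility in the realisable boundary slopes. In the QHS case the numerical constraints on the Seifert invariants line up so that the two sides can be matched up to isotopy, producing a horizontal taut foliation isotopic to the original $\mathcal{F}$.
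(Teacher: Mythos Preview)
This lemma is quoted from \cite{boyer2017foliations} and not proved in the present paper, so there is no argument here to compare against; nonetheless, your proposal has two genuine gaps.

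For the base orbifold claim, homology alone does not force the underlying surface to be $S^2$. A closed orientable Seifert manifold over $\mathbb{RP}^2$ with cone points can perfectly well be a rational homology sphere: abelianising, the relation $vhv^{-1}=h^{-1}$ gives $2h=0$, so rationally $h=0$, hence each $q_j=0$, and the surface relation $2v+\sum q_j=bh$ then kills $v$ as well, yielding $b_1=0$. Your ``parallel computation'' therefore does not exclude the non-orientable base. The correct route uses the other conclusion first: once $\mathcal{F}$ is known to be horizontal, its co-orientation picks out a global orientation of the Seifert fibres, which is impossible over a non-orientable base since the fibre orientation reverses along the crosscap generators. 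Combined with your orientable-base rank computation this gives $S^2$.

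For horizontality, you make the key observation---any vertical torus leaf $T$ separates $W$ since $H_2(W;\mathbb{Z})=0$---but then draw the wrong conclusion from it. A separating torus leaf already contradicts tautness of $\mathcal{F}$ on $W$: the co-orientation along $T$ points into one of the two pieces, making that piece a dead-end component, so no closed transversal can meet $T$. Hence the vertical part of the Brittenham decomposition is empty and $\mathcal{F}$ is horizontal; there is nothing to induct on. Your assertion that the restrictions $\mathcal{F}|_{W_i}$ are taut with $T$ as a boundary leaf is exactly backwards---that configuration is precisely the failure of tautness---and the reassembly step you flag as ``the hardest'' (matching horizontal boundary slopes across $T$) is both unnecessary and, as you suspect, not obviously achievable.
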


\begin{lem}[\cite{boyer2017foliations}, Proposition 6.6]
    Let $M$ be a compact orientable Seifert manifold that embeds in a rational homology sphere. Suppose a boundary multislope $[\alpha_{*}]$ is detected by some taut foliation $\mathcal{F}$. Then $[\alpha_{*}]$ is horizontal if and only if $\mathcal{F}$ is horizontal.
    \label{lem:bchori}
\end{lem}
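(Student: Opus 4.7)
The plan is to establish the two directions of the biconditional separately, with only one requiring real work. The ``only if'' direction ($\mathcal{F}$ horizontal $\Rightarrow [\alpha_*]$ horizontal) is essentially definitional: a horizontal foliation is transverse to every Seifert fiber, so restricting to any boundary torus yields a foliation transverse to the fiber slope on that torus, which by definition means the detected boundary slope there is horizontal.

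For the ``if'' direction, I would invoke the structural theorem for taut foliations in Seifert fibered spaces, originating with Brittenham and refined by Naimi, Eisenbud--Hirsch--Neumann, and Jankins--Neumann. After an initial isotopy, one can put $\mathcal{F}$ into a normal form with respect to the Seifert fibration: $M$ decomposes along a disjoint union of vertical tori $T_1,\ldots,T_k$ into pieces on each of which $\mathcal{F}$ is either horizontal (transverse to every fiber) or vertical (each leaf saturated by fibers). The first step is to rule out any vertical piece that meets $\partial M$, since such a piece would restrict on that boundary torus to the fiber-slope foliation, contributing a vertical slope to $[\alpha_*]$ and contradicting the horizontal-multislope hypothesis.

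The remaining task is to eliminate interior vertical pieces. Here I would use the embedding $M \hookrightarrow M^*$ into a rational homology sphere: a vertical piece $W \subset \mathrm{int}(M)$ is a Seifert submanifold of $M^*$ bounded by vertical tori, which are separating in $M^*$. Capping off $W$ along these separating tori inside $M^*$ (filling the complementary side with a rational homology ball, possibly after an auxiliary Dehn filling) produces a closed Seifert rational homology sphere that inherits a co-oriented taut foliation extending $\mathcal{F}|_W$. Applying Lemma~\ref{lem:bcclosed} to this closed manifold isotopes that foliation to be horizontal, supplying a horizontal replacement for $\mathcal{F}|_W$.

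The main technical obstacle will be gluing these horizontal replacements back into $\mathcal{F}$ across the vertical tori $T_i$: on each $T_i$ the original $\mathcal{F}$ restricts to a foliation by fibers, while the new horizontal piece restricts to one of horizontal slope, so the two sides a priori do not match. Reconciling them requires inserting a thin foliated $T^2 \times I$ interpolating region between vertical and horizontal slopes (at the cost of possibly introducing new holonomy, as in Remark~\ref{rem:localpf}) while preserving tautness, co-orientability, and the boundary multislope. Controlling this interpolation simultaneously at every $T_i$ as one iterates over all vertical pieces is the delicate heart of the argument, but the structure theorem guarantees that finitely many such replacements suffice, ultimately yielding a global horizontal foliation isotopic to $\mathcal{F}$.
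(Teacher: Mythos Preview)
This lemma is not proved in the present paper at all: it is quoted verbatim as \cite{boyer2017foliations}, Proposition~6.6, and used as a black box. So there is no proof here to compare your proposal against.

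That said, your proposed argument for the ``if'' direction has a genuine gap. Your plan is to excise each interior vertical piece $W$, cap it off inside the ambient rational homology sphere, isotope the resulting closed foliation horizontal via Lemma~\ref{lem:bcclosed}, and then glue the horizontal replacement back in. But this does not show that the original $\mathcal{F}$ is isotopic to a horizontal foliation; it only attempts to build \emph{some other} horizontal foliation detecting the same multislope, which is a strictly weaker conclusion than the one stated. Worse, the gluing obstruction you yourself flag is fatal: across each interior vertical torus $T_i$, the untouched side of $\mathcal{F}$ restricts to the fiber slope while your new horizontal piece restricts to a horizontal slope, and no foliated $T^2\times I$ collar can interpolate between two distinct suspension slopes while remaining taut and co-oriented. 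Inserting Reeb-type interpolation would destroy tautness.

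The clean route, and presumably the one in \cite{boyer2017foliations}, avoids any cutting and regluing. After Brittenham normalisation, every leaf of $\mathcal{F}$ is either everywhere transverse to the fibers or is a union of fibers; the vertical leaves form a sublamination consisting of vertical tori and vertical annuli. Since $M$ embeds in a rational homology sphere, every vertical torus in $M$ is separating, and a separating torus leaf is incompatible with tautness. Hence any vertical leaf is an annulus, whose boundary circles lie on $\partial M$ with the fiber slope, forcing a vertical entry in $[\alpha_*]$. Contrapositively, if $[\alpha_*]$ is entirely horizontal there are no vertical leaves and $\mathcal{F}$ is already (after the Brittenham isotopy) horizontal.
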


\begin{lem}[\cite{boyer2017foliations}, Proposition 6.9]
    Let $M$ be a compact orientable Seifert manifold that embeds in a rational homology sphere. Let $[\alpha_{*}]$ be a boundary multislope of $M$, and $v([\alpha_{*}])$ the number of vertical slopes in $[\alpha_{*}]$.

    \begin{enumerate}[(1)]
        \item If $M$ has non-orientable base orbifold, then $[\alpha_{*}]$ is foliation detected if and only if $v([\alpha_{*}])\geq 1$.
        \item If $M$ has orientable base orbifold and $v([\alpha_{*}])>0$, then $[\alpha_{*}]$ is foliation detected if and only if $v([\alpha_{*}])\geq 2$.
    \end{enumerate}
    \label{lem:bcverti}
\end{lem}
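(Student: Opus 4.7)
The plan is to combine Lemma~\ref{lem:bchori} with the classification of co-oriented taut foliations on Seifert manifolds, splitting detected multislopes according to whether the detecting foliation is horizontal or not. By Lemma~\ref{lem:bchori}, a multislope with $v([\alpha_*])=0$ is detected precisely when $M$ admits a horizontal co-oriented taut foliation realizing the prescribed boundary slopes. Hence any multislope with $v([\alpha_*])\ge 1$ must be detected by a genuinely non-horizontal foliation, and the two parts of the lemma amount to identifying which of these alternatives is available.

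For the horizontal regime I would appeal to the existence theory of Eisenbud--Hirsch--Neumann, Jankins--Neumann, and Naimi, which parametrises horizontal taut foliations on Seifert manifolds with boundary by admissible boundary holonomies satisfying linear inequalities on the Seifert invariants. The key observation for the necessity direction of part (1) is that on an orientable Seifert manifold with non-orientable base orbifold, the fibre direction cannot be coherently oriented: parallel transport around an orientation-reversing loop in the base reverses fibre orientation in order to preserve the orientation of the total space. A co-oriented horizontal foliation would supply exactly such a coherent orientation of fibres, so none can exist, and this rules out $v([\alpha_*])=0$ multislopes in the non-orientable base case.

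For part (2) necessity I would use the structure of a non-horizontal co-oriented taut foliation on a Seifert manifold: after isotopy it decomposes $M$ into vertical and horizontal pieces glued along vertical annulus leaves. Each such vertical annulus has both boundary circles equal to regular fibres on $\partial M$, and such a fibre can only appear as a closed leaf of $\mathcal{F}|_{\partial M_i}$ when the detected slope on that component is vertical. Thus every vertical annulus leaf has both boundary circles on vertical boundary tori. A bookkeeping argument on co-orientations then shows that these annuli must pair up the vertical boundary tori: if only a single vertical boundary torus were present, the two boundary circles of each vertical annulus would force incompatible co-orientations on that torus, contradicting tautness. This yields $v([\alpha_*])\ge 2$.

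For sufficiency I would construct $\mathcal{F}$ directly: given enough vertical boundary slopes, one chooses a system of vertical annuli whose boundary circles exhaust the vertical boundary components (pairing them up in part (2), or closing off a single vertical torus using an orientation-reversing loop in the base in part (1)), foliates the complement horizontally using the Eisenbud--Hirsch--Neumann existence theorem, and matches along the annuli. The embedding into a rational homology sphere is used here to control the Seifert invariants so that the Jankins--Neumann inequalities for the prescribed non-vertical slopes are satisfied. The main obstacle, as in Boyer--Clay's original argument, is the parity-type necessity in part (2): turning the vertical/horizontal decomposition into an honest count of vertical boundary tori and keeping the co-orientation bookkeeping consistent is the delicate step, and it is there that the rational homology sphere hypothesis really bites, by ruling out exotic non-separating or torsion boundary classes that would otherwise let the annuli escape the pairing.
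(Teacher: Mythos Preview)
The paper does not prove this lemma at all: it is quoted verbatim from \cite{boyer2017foliations}, Proposition~6.9, and used as a black box in the proof of Proposition~\ref{prop:rhs}. There is therefore no proof in the paper to compare your proposal against.

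As a side remark on your sketch itself: the ingredients you list (Brittenham--Eisenbud--Hirsch--Neumann/Jankins--Neumann/Naimi for the horizontal regime, the vertical/horizontal decomposition for non-horizontal foliations, the fibre-orientation obstruction over a non-orientable base) are indeed the ones Boyer--Clay use. However, your necessity argument for part~(2) is not quite right as stated. A vertical annulus can perfectly well have both boundary circles on the \emph{same} boundary torus without any co-orientation conflict (take a vertical annulus over a properly embedded arc in the base with both endpoints on one boundary circle), so the ``incompatible co-orientations'' pairing you describe does not by itself force $v([\alpha_*])\ge 2$. The actual argument in \cite{boyer2017foliations} passes through a more careful analysis of the complementary pieces of the vertical sublamination and uses the planarity of the base orbifold (which is where the rational-homology-sphere embedding enters, rather than through the Jankins--Neumann inequalities as you suggest). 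If you want to reconstruct the proof, that is the step to revisit.
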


\subsection{Main proof}

Now we are in position to prove Proposition~\ref{prop:rhs}. 

\begin{proof}[Proof of Proposition~\ref{prop:rhs}]
    We consider the JSJ decomposition of $M$. Since $M$ is separating, each of the decomposing torus is separating, and we can use a tree to represent the JSJ decomposition of $M$, where the nodes are the Seifert or hyperbolic pieces and the edges are the decomposing tori.

    Since $\mathcal{F}$ is taut, no decomposing torus can be isotoped to be a leaf of $\mathcal{F}$. Hence by~\cite{brittenham1999incompressible}, after possibly blowing up leaves of $\mathcal{F}$ we can isotope $\mathcal{F}$ such that it is transverse to the decomposing tori, and is an essential lamination when restricted to each JSJ piece.

    Now consider the position of our torus $T$. We consider the JSJ decomposing tori that are boundary components of some hyperbolic pieces. If $T$ can be isotoped to any of such decomposing tori, we then consider the foliation $\mathcal{F}|T$. If it is Reebless, by Lemma~\ref{lem:reebless} the slope of $\mathcal{F}|T$ is what we want. If $\mathcal{F}|T$ contains some Reeb annuli, we can consider the two pieces $M_1, M_2$, and suppose $M_1$ contains some hyperbolic piece whose boundary contains $T$. We then claim that $M_1$ is persistently foliar. In fact, $\mathcal{F}|_{M_1}$ cannot contain any annulus leaf, for otherwise this leaf would intersect the hyperbolic piece in essential annuli. Hence by Lemma~\ref{lem:essann} $\mathcal{F}|_{M_1}$ is taut. Now by Lemma~\ref{lem:pfoliar} $M_1$ is persistently foliar. We can then pick the longitudinal rational slope of $M_2$, which is CTF-detected by Gabai's sutured manifold decomposition~\cite{gabai1983foliations}. This rational slope would then be detected in both $M_1$ and $M_2$.
    
    Now suppose $T$ cannot be isotoped to any of the JSJ decomposing tori that are boundary components of some hyperbolic pieces. Then we claim that it can be isotoped to be disjoint from these decomposing tori. In fact, if it does essentially intersect such a decomposing torus, then it would also intersect some hyperbolic piece in essential annuli, leading to a contradiction.

    The JSJ decomposing tori that are boundary components of some hyperbolic pieces separate $M$ into pieces that are either hyperbolic or graph manifolds. We still can use a tree to describe such a decomposition, see Figure~\ref{fig:graphdecomp}, where our torus $T$ lies in some maximal graph manifold piece $G$ ($G$ could also be Seifert fibered).

    \begin{figure}[!hbt]
        \begin{overpic}[scale=0.5]{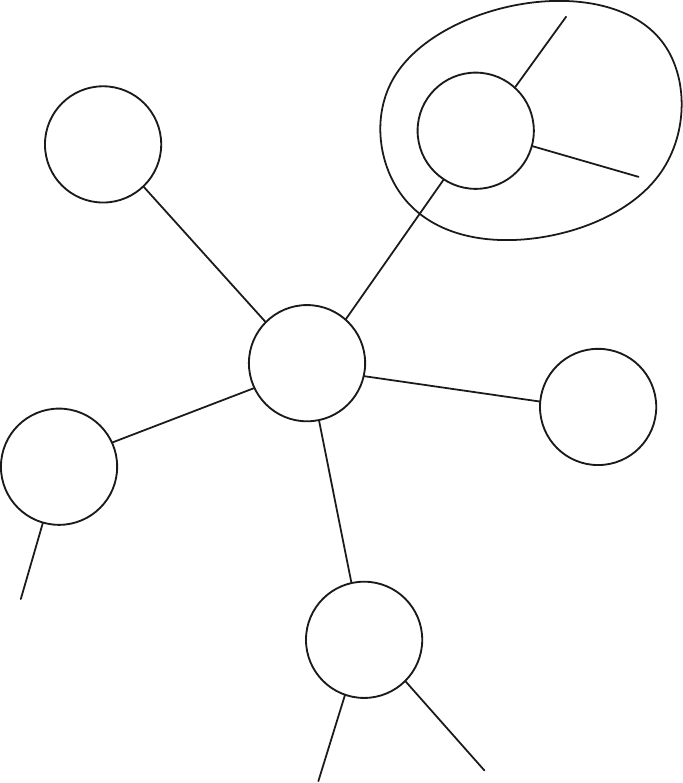}
            \put(37,51.5){$G$}
            \put(10,79){$H_1$}
            \put(58,81){$H_2$}
            \put(90,83){$\bar{H}_2$}
            \put(72.5,46){$H_3$}
            \put(43,16){$H_4$}
            \put(4,38){$H_5$}
        \end{overpic}
        \caption{Local picture around the graph manifold piece $G$}
        \label{fig:graphdecomp}
    \end{figure}

    Now consider the neighbours of $G$ on the new decomposing tree. Let $T_1,...,T_n$ be the boundary tori of $G$. Let $H_i$ be the hyperbolic piece whose boundary contains $T_i$, and $\bar{H}_i$ be the component of $M$ containing $H_i$ when separating $M$ along $T_i$, see Figure~\ref{fig:graphdecomp}. Again by Lemma~\ref{lem:essann}, $\mathcal{F}|_{\bar{H}_i}$ is taut.

    We can then consider how $\mathcal{F}$ intersects each $T_i$. Without loss of generality we may assume $\mathcal{F}$ intersects $T_1,...,T_k$ in foliations with Reeb annuli, and $T_{k+1},...,T_n$ in suspensions. Then by Lemma~\ref{lem:pfoliar}, $\bar{H}_{1},...,\bar{H}_k$ are persistently foliar. Let $G_1:=G\cup\bar{H}_{k+1}...\cup\bar{H}_n$ be the submanifold of $M$ obtained by cutting off the interiors of $\bar{H}_1,...,\bar{H}_k$. Now $(G_1,\partial G_1)$ is a taut sutured manifold with $H_2(G_1,\partial G_1)\neq 0$. By~\cite{gabai1983foliations}, there is then a taut foliation $\mathcal{F}_1$ of $G_1$ intersecting $\partial G_1$ transversely in suspensions of rational slopes.
    
    The new foliation $\mathcal{F}_1$ (after possibly blowing up some leaves) can be again isotoped to intersect $T_{k+1},...,T_n$ transversely with no boundary parallel annuli leaves in the pieces. We may again assume $\mathcal{F}_1$ intersects $T_{k+1},...,T_{k+l}$ in foliations with Reeb annuli and $T_{k+l+1},...,T_n$ in suspensions. Similarly to before we can claim that $\bar{H}_{k+1},...,\bar{H}_{k+l}$ are persistently foliar. We can then take $G_2:=G\cup\bar{H}_{k+l+1}...\cup\bar{H}_n$ and apply Gabai's theorem to $(G_2,\partial G_2)$, which gives a CTF-detected rational multislope $\{[\alpha_i]:\alpha_i\in T_i,\;1\leq i\leq k+l\}$ for $G_2$.

    We can repeat this procedure until we have $G_r:=G\cup \bar{H}_{m+1}...\cup \bar{H}_n$ (for some $m\leq n$), where $G_r$ admits a taut foliation $\mathcal{F}_r$ intersecting its boundary $T_1,...,T_m$ transversely in suspensions of rational slopes, and the essential tori $T_{m+1},...,T_n$ transversely in suspensions. Moreover, $\bar{H}_i$ is persistently foliar for $i\leq m$.

    Now by Lemma~\ref{lem:reebless}, $\mathcal{F}_r|_G$ is taut. We can then further consider the JSJ decomposition of $G$ into Seifert pieces. Suppose first that $T$ can be isotoped to some JSJ decomposing torus. It is then a classical argument by~\cite{brittenham1997graph} using the minimality of decomposing tori that $\mathcal{F}_r|_T$ must be Reebless. Since $\bar{H}_1,...,\bar{H}_m$ are persistently foliar, we can pick some taut foliation of $\bar{H}_i$ intersecting $T_i$ in the rational slope corresponding to $\mathcal{F}_r|_{T_i}$, and glue these $\bar{H}_i$ back to $G_r$. Then by the gluing theorem of~\cite{boyer2021slope} (see Remark~\ref{rem:multigluing}), we will eventually obtain a taut foliation $\mathcal{F}_r'$ of $M$ which is $\mathcal{F}_r$ when restricted to $G_r$ (up to blowing up leaves). Now that the taut foliation $\mathcal{F}_r'$ intersects $T$ in suspensions, by Lemma~\ref{lem:reebless} the slope of $\mathcal{F}_r'|_{T}$ is what we want.

    Now suppose $T$ is not isotopic to any JSJ decomposing torus. We claim again that $T$ can be isotoped to be disjoint from all JSJ tori. Suppose not. Then $T$ intersects 2 adjacent Seifert pieces $S_1,S_2$ in essential annuli $A_1,A_2$, such that $A_i\subset S_i$ ($i=1,2$) and $A_1,A_2$ share a boundary circle. However, each essential annulus in a Seifert manifold can be isotoped to be vertical, after possibly changing the Seifert fibration. We would then conclude that $S_1$ and $S_2$ share the same vertical slope on the boundary torus they intersect, contradicting the minimality of JSJ decomposition.

    It follows that, $T$ is eventually istoped into some Seifert manifold $S$. Then $T$ can be further isotoped to be either horizontal or vertical. If $T$ can be isotoped horizontal, then $S=M$ is closed; but then according to Lemma~\ref{lem:bcclosed}, the base orbifold of $S$ is orientable, and thus defines a co-orientation along the fibers, which in turn forces $T$ to be non-separating, leading to a contradiction. Hence $T$ can be isotoped vertical. Now $T$ further separates $S$ into two Seifert manifolds $S_1, S_2$. Suppose $S_1\subset M_1$.

    Now consider the foliation $\mathcal{F}_r|_S$. By Lemma~\ref{lem:reebless} again it is taut. If this foliation is horizontal, then it would intersect $T$ in suspensions, and as before by considering the glued-up taut foliation $\mathcal{F}_r'$ of $M$ we know the slope of $\mathcal{F}_r'|_T$ is detected in both $M_1$ and $M_2$. Now suppose $\mathcal{F}_r|_S$ is not horizontal. Then $S$ is necessarily not closed by Lemma~\ref{lem:bcclosed}, and by Lemma~\ref{lem:bchori} $\mathcal{F}_r|_S$ restricts to some boundary tori of $S$ in suspensions of vertical slopes.

    If the base orbifold of $S$ is orientable, then by Lemma~\ref{lem:bcverti} there are at least two vertical slopes in the boundary multislope of $S$ detected by $\mathcal{F}_r|_S$. If $S_1$ contains at least two boundary tori of $S$ with vertical slopes detected by $\mathcal{F}_r|_S$, then by Lemma~\ref{lem:bcverti} for any slope $[\alpha]$ of $T$ there is some taut foliation $\mathcal{F}_{\alpha}$ of $S_1$ intersecting $\partial S_1-T$ in the same slopes as $\mathcal{F}_r$, while intersecting $T$ in the given slope. Now by gluing $\mathcal{F}_r'|_{M_1-\mathring{S_1}}$ (taut by Lemma~\ref{lem:reebless}) to $\mathcal{F}_{\alpha}$ using the gluing theorem, we have actually shown that $M_1$ is persistently foliar. Picking the rational longitudinal slope of $M_2$ then closes this case. The same thing happens when $S_2$ contains at least two boundary tori of $S$ with vertical slopes. If both $S_1$ and $S_2$ contains exactly one vertical slope, then we can pick the slope of $T$ to be vertical, and by Lemma~\ref{lem:bcverti} we can get taut foliations $\mathcal{F}_{v_i}$ of $S_i$ ($i=1,2$), intersecting $\partial S_i-T$ in the same slopes as $\mathcal{F}_r|_S$, while intersecting $T$ in the vertical slope. By gluing $\mathcal{F}_r'|_{M-\mathring{S}}$ and these two foliations together, we get a taut foliation of $M$ intersecting $T$ in a suspension foliation, where we can then apply Lemma~\ref{lem:reebless}.

    Now suppose the base orbifold is not orientable. Then at least one of $S_1,S_2$ has a non-orientable base orbifold. Suppose $S_1$ has a non-orientable base orbifold. If $S_1$ also has a boundary torus of $S$ with vertical slope, then by Lemma~\ref{lem:bcverti} and a similar argument to above we can show that $M_1$ is actually persistently foliar. If $S_1$ does not contain any boundary torus of $S$ with vertical slope, then $S_2$ contains at least on such boundary torus of $S$, and by Lemma~\ref{lem:bcverti} again we can pick the slope on $T$ to be vertical to construct taut foliations for $S_1$ and $S_2$, and thus for $M_1$ and $M_2$.
\end{proof}

\section{Manifolds with positive first betti number}
\label{sec:3}

In this section we prove the following proposition:

\begin{prop}
    Let $M$ be a closed, orientable, and irreducible 3-manifold with $b_1(M)>0$. Let $T\subset M$ be a separating essential torus and $M=M_1\cup_T M_2$. Suppose $M$ admits a co-oriented taut foliation $\mathcal{F}$. Then there is a rational slope $[\alpha]$ of $T$ that is CTF-detected as boundary slopes of both $M_1$ and $M_2$.
    \label{prop:posbetti}
\end{prop}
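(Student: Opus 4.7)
The plan is to exploit $b_1(M)>0$ directly through Mayer--Vietoris to control the rational longitudes of $M_1,M_2$, and then apply Gabai's sutured manifold hierarchy theorem on each knot manifold with appropriately chosen sutures. The ambient taut foliation $\mathcal{F}$ does not play a direct role in this argument; it appears in the hypothesis only for consistency with the main theorem.

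First I would set up the homology. For each knot manifold $M_i$, let $\lambda_i\in\mathrm{PH}_1(T;\mathbb{Q})$ denote the rational longitude, namely the unique projective class spanning the rank-one kernel of $H_1(T;\mathbb{Q})\to H_1(M_i;\mathbb{Q})$ (furnished by half-lives-half-dies). Using $[T]=0$ in $H_2(M_i;\mathbb{Q})$ and $b_2(M_i)=b_1(M_i)-1$ (from Poincar\'e--Lefschetz duality), Mayer--Vietoris for $M=M_1\cup_T M_2$ yields
\[
b_1(M)=b_1(M_1)+b_1(M_2)-2+\delta,
\]
where $\delta=1$ if $\lambda_1=\lambda_2$ and $0$ otherwise. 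Thus $b_1(M)>0$ splits into \textbf{Case A}, $\lambda_1=\lambda_2$ (set $\alpha:=\lambda_1=\lambda_2$), and \textbf{Case B}, $\lambda_1\neq\lambda_2$ which forces $b_1(M_1)+b_1(M_2)\geq 3$, so WLOG $b_1(M_1)\geq 2$ (set $\alpha:=\lambda_2$).

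Next I would verify that $[\alpha]$ is CTF-detected in each $M_i$. For a rational slope $\alpha$ on $T$, consider the sutured manifold $(M_i,\gamma_\alpha)$ with $\gamma_\alpha$ a pair of oppositely oriented parallel curves of slope $\alpha$; this is taut because $M_i$ is irreducible and $T$ is incompressible, so the components $R_\pm(\gamma_\alpha)$ of $T\setminus\gamma_\alpha$ are essential annuli. The long exact sequence of the pair $(M_i,R_+(\gamma_\alpha))$ together with $b_2(M_i)=b_1(M_i)-1$ gives
\[
\dim H_2(M_i,R_+(\gamma_\alpha);\mathbb{Q})=b_1(M_i)-1+\epsilon_i,
\]
where $\epsilon_i=1$ iff $\alpha=\lambda_i$. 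In Case A one has $\epsilon_1=\epsilon_2=1$; in Case B one has $\epsilon_2=1$ and $b_1(M_1)-1\geq 1$. Hence the dimension is positive on both sides, a Thurston-norm-minimizing decomposing surface exists, and Gabai's sutured manifold hierarchy theorem~\cite{gabai1983foliations} produces a co-oriented taut foliation of $M_i$ transverse to $T$ inducing a suspension foliation of slope $[\alpha]$ on $T$.

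The main technical point is the invocation of Gabai's theorem: one needs the full hierarchy machinery to guarantee both that the resulting foliation is taut and that its restriction to $\partial M_i$ is genuinely a suspension foliation of slope $[\alpha]$, rather than a more complicated boundary pattern involving $R_\pm$ as annulus leaves (removed by standard blow-down modifications). Once this is in hand the proposition follows immediately in both cases, without needing the results of Section~\ref{sec:4} on irrational slopes or any appeal to the given foliation $\mathcal{F}$.
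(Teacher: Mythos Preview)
Your overall strategy---Mayer--Vietoris to split into cases, then Gabai on each side---matches the paper's, and your Case~A (common rational longitude) is exactly what the paper does. The gap is in Case~B, specifically in detecting $\alpha=\lambda_2\neq\lambda_1$ on the $M_1$ side.

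You invoke Gabai's hierarchy theorem on the sutured manifold $(M_1,\gamma_\alpha)$ and assert that the output is a taut foliation transverse to $T$ with suspension boundary of slope $[\alpha]$. But Gabai's 1983 theorem produces a foliation \emph{tangent} to $R_\pm(\gamma_\alpha)$, and the passage to a foliation transverse to all of $T$ is not the ``standard blow-down'' you suggest. If that passage always succeeded, the same argument would run for the taut sutured manifold $(N,\gamma_\beta)$ with $N$ any knot manifold and $\beta$ any rational slope, forcing every knot manifold to be persistent foliar---which is false (torus knot exteriors, for instance). Your condition $H_2(M_1,R_+)\neq 0$ does not rescue this: since $\alpha\neq\lambda_1$, any relative cycle has boundary null-homologous in the annulus $R_+$ and can be capped off to a \emph{closed} surface disjoint from $T$. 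Such a surface imposes no control on how the resulting foliation meets $\partial M_1$.

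This is precisely why the paper takes a longer route in Case~B. It uses that same closed non-separating surface $S\subset M_1\setminus T$, but feeds it into Gabai's 1987 machinery ($S_T$-atoroidality and $I$-cobordisms, \cite{gabai1987foliations}~Theorem~1.7) to arrange a hierarchy that terminates in product pieces together with a single $T^2\times I$ collar of $T$; this collar is then foliated by hand, as in Lemma~\ref{lem:pfoliar}, to realise any prescribed rational boundary slope. That argument yields the stronger conclusion that $M_1$ is persistent foliar, from which the detection of $\lambda_2$ in $M_1$ follows immediately. So your Case~B is reaching for the right conclusion, but the 1983 hierarchy alone does not get you there; the refinement in \cite{gabai1987foliations} is genuinely needed.
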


The proof is just a slight modification to the method used in~\cite{gabai1983foliations}\cite{gabai1987foliations}. We recall the following definitions in~\cite{gabai1987foliations}:

\begin{defn}[\cite{gabai1987foliations}, Definition 1.3]
    An \textit{I-cobordism} between closed connected oriented surfaces $T_0$ and $T_1$ is a compact oriented 3-manifold $V$ such that $\partial V=T_0\cup T_1$ and for $i=0,1$ the induced maps $j_i: \mathrm{H}_1(T_i)\rightarrow \mathrm{H}_1(V)$ are injective.
\end{defn}

\begin{defn}[\cite{gabai1987foliations}, Definition 1.6]
    Let $M$ be a compact oriented 3-manifold, $S$ a properly embedded oriented surface in $M$, and $P$ a toral component of $\partial M$ such that $P\cap S=\varnothing$. $M$ is called \textit{$S_P$-atoroidal} if boundary parallel tori are the only surfaces $I$-cobordant to $P$ in $M-S$.
\end{defn}

\begin{proof}[Proof of Proposition~\ref{prop:posbetti}]
    We check the Mayer-Vietoris sequence with $\mathbb{Q}$-coefficients for $(M_1,M_2)$:

    $\xymatrix{
        ... \ar[r] &\mathrm{H}_1(T;\mathbb{Q}) \ar[r]^-{f} &\mathrm{H}_1(M_1;\mathbb{Q})\oplus \mathrm{H}_1(M_2;\mathbb{Q}) \ar[r] &\mathrm{H}_1(M;\mathbb{Q}) \ar[r]^-{0} &...
    }$
    ~\\

    If $\mathrm{ker} f$ is nonzero, then the rational slope corresponding to the kernel is longitudinal in both $M_1$ and $M_2$, and thus CTF-detected in both $M_1$ and $M_2$ by~\cite{gabai1983foliations}.

    Now suppose $\mathrm{ker} f=0$. Then one of $\mathrm{H_1}(M_i)$ ($i=1,2$) has rank $\geq 2$. Suppose this is $M_1$. Then we know $\mathrm{H_1}(M_1,\partial M_1)$ has rank $\geq 1$. Hence we can find a closed, orientable, non-separating, Thurston-norm-minimizing surface $S\subset M_1$ not intersecting $T$. We henceforth show that $M_1$ is actually persistently foliar, so that the longitudinal slope of $M_2$ (necessarily rational) is CTF-detected in both $M_1$ and $M_2$.

    If $M_1$ is $S_T-$atoroidal, then according to~\cite{gabai1987foliations} Theorem 1.7, the sutured manifold decomposition of $M_1$ using $S$ while avoiding $T$ ends up with sutured product pieces and a single $T^2\times I$, whose one boundary component is $T$ and the other is some $T'$ on which there are annuli sutures. We can actually regard $T'$ as the boundary of some ideal polygon bundle over $S_1$, where the $R_{\pm}$ part comes from the edges of the ideal polygon, and the annuli sutures are the cusps. By a similar argument to Lemma~\ref{lem:pfoliar}, we can foliate this $T^2\times I$, while observing the sutures, such that the foliation intersects $T$ in a suspension of any rational slope we want.

    If $M_1$ is not $S_T-$atoroidal, then again by~\cite{gabai1987foliations} there is some essential torus $T_1\subset M_1$ and some $I$-cobordism $V$ disjoint from $S$, such that $\partial V=T\cup (-T_1)$, and $N:=M_1-\mathring{V}$ is $S_{T_1}-$atoroidal. Then by our argument in the previous paragraph this $N$ is persistently foliar. We now consider $V$. Since it is an $I$-cobordism, for any rational slope $[\alpha]$ of $T$ (where $\alpha\in \mathrm{H}_1(T)$ represents the slope), there is a rational slope $[\beta]$ of $T_1$, such that $\alpha-\beta\in \mathrm{H}_1(\partial V)$ is null-homologous in $V$ (as the image of the homological map induced by inclusions). This implies the existence of a Thurston-norm-minimizing surface $S'$, representing a non-trivial class in $\mathrm{H}_2(V,\partial V)$, while intersecting $T$ and $T_1$ in slopes $[\alpha]$ and $[\beta]$ respectively. If we perform sutured manifold decomposition to $(V,\partial V)$ using $S'$, we would get a taut foliation intersecting $T$ and $T_1$ in suspensions of $[\alpha]$ and $[\beta]$ respectively. Moreover, since $N$ is persistently foliar, for any such $[\beta]$ we can (using Remark~\ref{rem:multigluing}) glue up $N$ and $V$ along $T_1$ and get a taut foliation of $M_1$ that intersects $T$ in a suspension of $[\alpha]$. Since the choice of $[\alpha]$ is arbitrary, we have actually shown that $M_1$ is persistently foliar.
\end{proof}

\section{Laminar branched surface and irrational detected slopes}
\label{sec:4}

In this section we show that irrational slopes are in the interior of the set of (CTF-)detected slopes in most cases. We will show that these are easy corollaries of the laminar branched surface theory.

\subsection{Laminar branched surfaces}

We first briefly review the basic definitions of branched surfaces. 

\begin{figure}[!hbt]
    \begin{overpic}[scale=0.6]{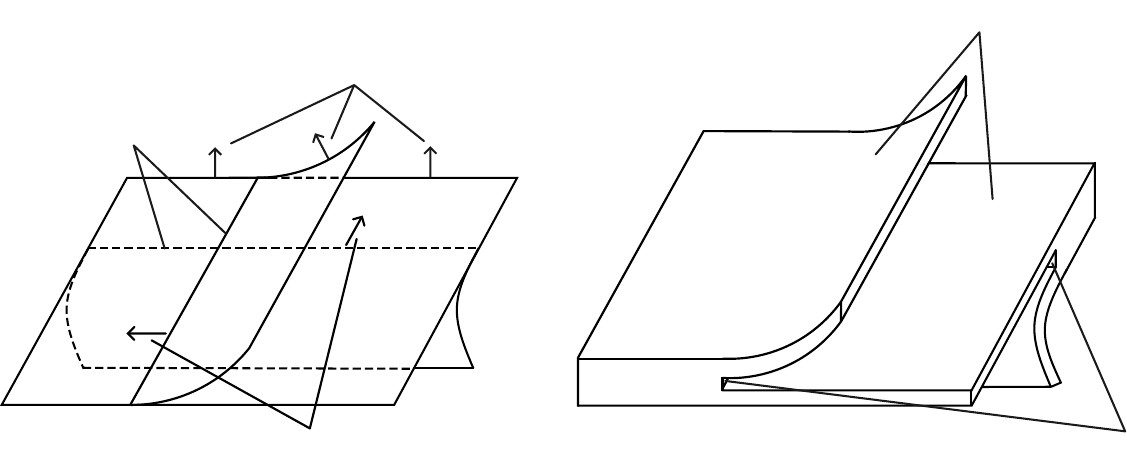}
        \put(0,29){branch locus}
        \put(20,35){co-orientations}
        \put(28,1){branch direction}
        \put(85,40){$\partial_h$}
        \put(99,0){$\partial_v$}
        \put(20,0){$(a)$}
        \put(70,0){$(b)$}
    \end{overpic}
    \caption{Branched Surfaces}
    \label{fig:brsf}
\end{figure}

A \textit{branched surface} $\mathcal{B}$ in a 3-manifold $M$ is a compact 2-complex locally modeled on Figure~\ref{fig:brsf}.$(a)$. The set $L$ of points not having a neighbourhood homeomorphic to $\mathbb{R}^2$ is called the \textit{branch locus} of $\mathcal{B}$. The components of $\mathcal{B}-L$ are called \textit{(branch) sectors} of $\mathcal{B}$. The normal direction of the branch locus pointing to the side with fewer sectors is called the \textit{branch direction}, see Figure~\ref{fig:brsf}.$(a)$. The branched surface is called \textit{co-oriented} if each branch sector is orientable and is equipped with a (co-)orientation, such that these (co-)orientations agree at the branch locus, as depicted in Figure~\ref{fig:brsf}.$(a)$.

By taking trivial $I$-bundles of each branch sector and pasting them together as in Figure~\ref{fig:brsf}.$(b)$ we can define $N(\mathcal{B})$ the \textit{regular neighbourhood} of $\mathcal{B}$. There is then a bundle projection $\pi:N(\mathcal{B})\rightarrow \mathcal{B}$ by collapsing the $I$-fibers. The part of $\partial N(\mathcal{B})$ consisting of the boundaries of $I$-fibers is called the \textit{horizontal boundary} of $N(\mathcal{B})$, denoted as $\partial_h N(\mathcal{B})$; the rest of $\partial N(\mathcal{B})$ consists of both interior segments of $I$-fibers at the branch locus of $\mathcal{B}$ and $I$-fibers transverse to the boundary of $\mathcal{B}$, and is called the \textit{vertical boundary} of $N(\mathcal{B})$, denoted as $\partial_v N(\mathcal{B})$, see Figure~\ref{fig:brsf}.$(b)$. A lamination $\mathcal{L}$ is \textit{carried} by $N(\mathcal{B})$ if it embeds in $N(\mathcal{B})$ while transverse to the $I$-fibers. It is \textit{fully carried} by $N(\mathcal{B})$ if moreover $\pi({\mathcal{L}})=\mathcal{B}$.

People were interested in which branched surfaces carry \textit{essential} laminations. This was answered by~\cite{gabai1989essential} using essential branched surfaces, and later improved by~\cite{li2002laminar} using laminar branched surfaces. In particular, in~\cite{li2002laminar} it was shown that a 3-manifold admits an essential lamination if and only if it admits a laminar branched surface. To introduce laminar branched surfaces here we first recall the following definition from~\cite{li2002laminar}:

\begin{defn}[\cite{li2002laminar}, Definition 1.3]
    Let $D^2\times I$ be a product component of $M-int(N(\mathcal{B}))$, where $D^2\times \partial I=D_1\sqcup D_2$ consists of disk horizontal boundary components of $N(\mathcal{B})$. Let $\pi:N(\mathcal{B})\rightarrow \mathcal{B}$ be the bundle projection. If $\pi$ restricted to the interior of $D_1\cup D_2$ is injective, i.e. each $I$-fiber of $N(\mathcal{B})$ intersects the interior of $D_1\cup D_2$ in either the empty set or a single point, then this $D^2\times I$ is called a \textit{trivial product region}, and $\pi(D_1\cup D_2)$ a \textit{trivial bubble} in $\mathcal{B}$.
    \label{defn:trivbub}
\end{defn}

Trivial product regions would destroy sink disks (to be introduced below) trivially, without changing the branched surface's ability of carrying laminations. In fact, if $K=D^2\times I$ is a trivial product region with respect to $\mathcal{B}$, then $N(\mathcal{B})\cup K$ is a regular neighbourhood of another branched surface $\mathcal{B}'$ obtained by collapsing the trivial bubble, and $N(\mathcal{B})$ carries a surface or lamination if and only if $N(\mathcal{B}')$ also carries it.

Now we are ready to define laminar branched surfaces.

\begin{defn}[\cite{li2002laminar}, Definition 1.4]
    A branched surface $\mathcal{B}$ in $M$ is called a \textit{laminar branched surface} if
    \begin{enumerate}
        \item $\partial_h N(\mathcal{B})$ is incompressible in $M-\mathrm{int}(N(\mathcal{B}))$, no component of $\partial_h N(\mathcal{B})$ is a sphere, and $M-B$ is irreducible;
        \item there is no monogon in $M-\mathrm{int}(N(\mathcal{B}))$, i.e. no disk $D$ properly embedded in $M-\mathrm{int}(N(\mathcal{B}))$ with $\partial D$ intersecting $\partial_v N(\mathcal{B})$ in a single interval fiber;
        \item there is no Reeb component, i.e. $\mathcal{B}$ does not carry a torus that bounds a solid torus; and
        \item $\mathcal{B}$ has no sink disk after collapsing all trivial bubbles. Here a \textit{sink disk} is a disk sector where along its boundary the branch direction always points inwards, see Figure~\ref{fig:skdsk}.$(a)$.
    \end{enumerate}
\end{defn}

\begin{figure}[!hbt]
    \begin{overpic}[scale=0.6]{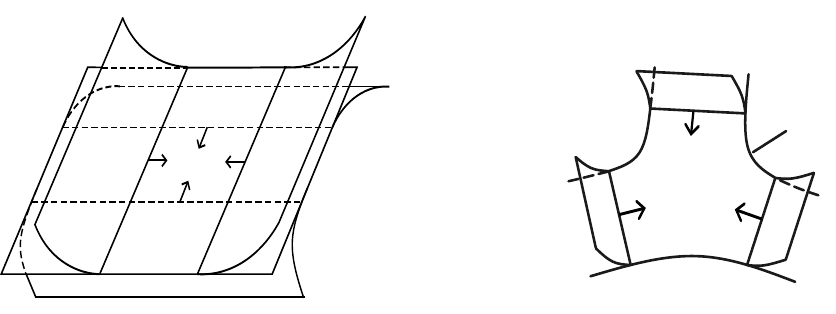}
        \put(18,-3){$(a)$}
        \put(81,-3){$(b)$}
        \put(95,22){$\partial \mathcal{B}$}
    \end{overpic}
    \caption{Sink disk and half sink disk}
    \label{fig:skdsk}
\end{figure}

When $M$ and $\mathcal{B}$ have nonempty boundary, we have a relative version of sink disks that gains importance. More precisely, a \textit{half sink disk} is a disk sector whose boundary arcs, if coming from the branch locus, always have branch directions pointing inwards, see Figure~\ref{fig:skdsk}.$(b)$.

Now we quote some modified results of~\cite{li2002laminar} and~\cite{li2003boundary} in the relative case:

\begin{prop}[modified from~\cite{li2002laminar}, Lemma 5.4]
    Let $M$ be a compact, connected, orientable, and irreducible 3-manifold with toral boundary. Let $\lambda$ be an essential lamination of $M$. Suppose $\lambda$ is not a lamination by simply connected leaves. Then there exists a laminar branched surface $\mathcal{B}$ carrying $\lambda$. Moreover, $\mathcal{B}$ has no half sink disk.
    \label{prop:safe}
\end{prop}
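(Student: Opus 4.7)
The plan is to adapt Li's construction from~\cite{li2002laminar}, Lemma 5.4 (which is stated there for closed 3-manifolds) to our setting with toral boundary, strengthened so as to simultaneously eliminate half sink disks. In outline: construct an initial branched surface $\mathcal{B}_0$ fully carrying $\lambda$ by collapsing a transverse $I$-fiber structure on a thin regular neighborhood of $\lambda$; then iteratively destroy any (half) sink disk by recognizing it as the base of a trivial product region and collapsing the corresponding trivial bubble.

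First I would fix a non-simply-connected leaf $L \subset \lambda$, take a regular neighborhood $N(\lambda)$ with an $I$-fibration transverse to $\lambda$, and arrange it near $\partial M$ so that $\lambda \cap \partial M$ is a lamination by curves of a single slope with $I$-fibers tangent to $\partial M$. Collapsing the $I$-fibers yields a branched surface $\mathcal{B}_0$ fully carrying $\lambda$, with $\partial \mathcal{B}_0$ a train track in $\partial M$. The laminar conditions (1)--(3) (incompressibility of $\partial_h N(\mathcal{B}_0)$, absence of monogons, and absence of Reeb components) follow from the essentiality of $\lambda$, namely from the absence of sphere leaves, end-incompressibility, and the absence of any torus leaf bounding a solid torus, exactly as in Li's closed case.

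The technical core is condition (4) together with the new half sink disk clause. Suppose $D$ is a sink disk or half sink disk of $\mathcal{B}_0$, and examine $\lambda \cap \pi^{-1}(D)$ inside $D \times I$. Inward-pointing branch directions along every branch-locus arc of $\partial D$ prevent leaves from exiting through the corresponding vertical boundary; the only exits are through $\partial_h N(\mathcal{B}_0)$ and, in the half sink case, through $\partial M$. Hence each component of $\lambda \cap \pi^{-1}(D)$ is a compact surface in $D \times I$ transverse to the $I$-fibers with boundary in $\partial_h N(\mathcal{B}_0) \cup \partial M$, and the local injectivity of its projection to $D$ forces it to be a disk (a section of the fibration over some subdisk). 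Stacking these section-disks along the $I$-direction, together with outermost horizontal boundary components, we bound a trivial product region in the sense of Definition~\ref{defn:trivbub}, which we collapse to convert the (half) sink disk into a trivial bubble. Iterating across all offending disks, with a suitable complexity measure on sectors, produces the desired $\mathcal{B}$.

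The hard part will be verifying termination and confirming that boundary collapses used to kill half sink disks do not create new sink disks in the interior, or destroy laminar progress already made. This requires handling $\partial_h N(\mathcal{B})$ and $\partial M$ symmetrically when recognizing trivial product regions, and keeping the boundary train track $\partial \mathcal{B} \subset \partial M$ under control throughout the splittings. With this bookkeeping in place, Li's original inductive termination argument carries over, and the resulting branched surface $\mathcal{B}$ is laminar, carries $\lambda$, and has no half sink disks.
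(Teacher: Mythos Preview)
Your approach diverges from the paper's (and from Li's original argument) in a way that introduces a genuine gap. The paper follows Li directly: put $\lambda$ in normal form with respect to a triangulation $\mathcal{T}$ having $\partial M\subset\mathcal{T}^{(2)}$, then use finitely many \emph{splittings} along regular annuli (seeded by an essential closed curve on the non-simply-connected leaf) to force $N(\mathcal{B})\cap\mathcal{T}^{(2)}$ into Li's \emph{safe region}. Sectors in the safe region either contain an essential closed curve or have an outward-pointing branch arc; either way they cannot be (half) sink disks. Since $\partial M\subset\mathcal{T}^{(2)}$, every sector meeting $\partial M$ is automatically safe once this step is done, and the interior is then handled exactly as in the closed case.

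Your proposal instead tries to kill (half) sink disks by identifying them with trivial bubbles and collapsing. There are two problems. First, the region you describe---section-disks of $\lambda$ stacked inside $\pi^{-1}(D)$---lives \emph{inside} $N(\mathcal{B})$, not in $M\setminus\operatorname{int}N(\mathcal{B})$, so it is not a trivial product region in the sense of Definition~\ref{defn:trivbub}; and collapsing a genuine trivial bubble only merges a sink disk with an adjacent sector, which may still be a sink disk. Second, and more telling, you fix a non-simply-connected leaf $L$ but never use it: your sink-disk-elimination step is insensitive to the hypothesis that $\lambda$ is not a lamination by planes, yet that hypothesis is essential (cf.\ Proposition~\ref{prop:planes}). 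In Li's argument the essential closed curve on $L$ is exactly what seeds the safe region and guarantees the splitting process terminates with no sink disks. Without that input, there is no mechanism in your outline preventing a sink disk from persisting indefinitely, and no termination argument for your ``suitable complexity measure.''
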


\begin{proof}
    The proof is essentially the same as that of~\cite{li2002laminar}, Lemma 5.4. In the relative case, we could take a triangulation $\mathcal{T}$ of $M$ so that $\partial M\subset \mathcal{T}^{(2)}$ and $\lambda$ is in normal form with respect to $\mathcal{T}$. We could still pick finitely many regular annuli and do necessary splittings for $\mathcal{B}$, so that eventually $N(\mathcal{B})\cap \mathcal{T}^{(2)}$ is contained in the safe region (as defined in~\cite{li2002laminar}). In particular, all branch sectors intersecting $\partial M$ are already contained in the safe region at this moment, and thus we can deal with the remaining 3-simplicies as if $M$ is closed and eventually get a laminar branched surface. By the definition of safe region, the branch sectors intersecting $\partial M$ either contain essential closed curves or have boundary arcs in the interior of $M$ with branch direction pointing outwards. Such sectors cannot be half sink disks.
\end{proof}

\begin{prop}[\cite{li2003boundary}, Theorem 2.2]
    Let $M$ be a knot manifold. Suppose $\mathcal{B}$ is a laminar branched surface without half sink disk and $\partial M-\partial \mathcal{B}$ is a union of bigons. For any rational slope $[r]\in \mathrm{PH}_1(\partial M;\mathbb{Q})$, let $M(r)$ be the Dehn filling along the rational slope $[r]$. If $[r]$ is realized by $\partial \mathcal{B}$, and $\mathcal{B}$ does not carry a torus that bounds a solid torus in $M(r)$, then $M(r)$ contains an essential lamination, which is carried by $\mathcal{B}$ when restricted to $M$.
    \label{prop:bdrytt}
\end{prop}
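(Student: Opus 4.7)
The natural approach is to extend $\mathcal{B}$ across the Dehn filling solid torus to a branched surface $\mathcal{B}'\subset M(r)$, verify that $\mathcal{B}'$ is still laminar, and then invoke the main theorem of \cite{li2002laminar} to conclude that $\mathcal{B}'$ fully carries an essential lamination of $M(r)$. Its intersection with $M$ is then a sublamination carried by $N(\mathcal{B})$, giving the conclusion.

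To construct $\mathcal{B}'$, let $V\subset M(r)$ be the attached solid torus, so its meridian $m\subset \partial M$ has slope $[r]$. Because $[r]$ is realized by $\partial\mathcal{B}$, I would isotope $m$ to a smooth curve carried by the train track $\partial\mathcal{B}$. Foliate $V$ by (possibly several parallel) meridian disks bounded by copies of this realization, and splice them into $\mathcal{B}$ along $\partial V=\partial M$, propagating the branching of $\partial\mathcal{B}$ consistently across the gluing. The bigon hypothesis on $\partial M-\partial\mathcal{B}$ is exactly what makes this gluing combinatorially rigid, and ensures that the complementary regions of $\mathcal{B}'$ in $V$ are trivial bubbles or genuine product pieces.

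The core of the proof is to show that $\mathcal{B}'$ is laminar in $M(r)$. Incompressibility of $\partial_h N(\mathcal{B}')$ and irreducibility of $M(r)-\mathcal{B}'$ reduce to the corresponding properties of $\mathcal{B}$ in $M$, since any compressing disk or reducing sphere can be isotoped off the new meridional disks using their product structure in $V$. Monogons in $M(r)-\mathrm{int}\,N(\mathcal{B}')$ are controlled by the bigon hypothesis: any would-be monogon meeting $V$ can be pushed to $\partial M$ and then cut into a genuine monogon for $\mathcal{B}$, which is excluded by laminarity of $\mathcal{B}$. The absence of Reeb components is precisely the extra assumption that $\mathcal{B}$ does not carry a torus bounding a solid torus in $M(r)$; a Reeb torus fully carried by $\mathcal{B}'$ and using some meridional sectors can be reduced to one carried by $\mathcal{B}$ using the product structure of $V$. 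The crucial step is the exclusion of sink disks after collapsing trivial bubbles: a sink disk of $\mathcal{B}'$ either lies entirely in $\mathcal{B}$, contradicting laminarity of $\mathcal{B}$, or involves a meridional sector. In the latter case, tracing branch directions across the splicing, the boundary arcs on $\partial M$ of some adjacent sector of $\mathcal{B}$ must receive inward branch direction from the newly attached meridional disk, forcing that sector to be a half sink disk of $\mathcal{B}$, which is forbidden by hypothesis.

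The main obstacle I anticipate is the combinatorial bookkeeping in the no-sink-disk step: one has to describe exactly how branch directions on the meridional disks interact with those on $\partial\mathcal{B}$ along the gluing, manage the trivial bubbles introduced or modified by the extension so that they are collapsed in concert with preexisting ones, and verify that every candidate sink disk in $\mathcal{B}'$ indeed descends to either a sink disk or a half sink disk of $\mathcal{B}$. The bigon and no-half-sink-disk hypotheses are tailored precisely to close off these cases, so the work is more a careful case analysis than the insertion of new ideas; nevertheless, making this combinatorics precise is the heart of \cite{li2003boundary} and the only non-routine part of the proof.
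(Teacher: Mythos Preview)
The paper does not give its own proof of this proposition: it is quoted verbatim as Theorem~2.2 of \cite{li2003boundary} and used as a black box. Your sketch is precisely the strategy of Li's original argument there---cap $\mathcal{B}$ with meridional disks in the filling solid torus, then verify that the resulting $\mathcal{B}'\subset M(r)$ is laminar (the bigon hypothesis controlling monogons and the no-half-sink-disk hypothesis controlling sink disks), and apply the main theorem of \cite{li2002laminar}---so there is nothing to compare against in the present paper.
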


It remains for us to discuss what happens when there is an essential lamination by simply connected leaves in the relative case (in closed case this is~\cite{li2002laminar}, Prop. 4.2). In fact, the approach only essentially requires compactness, so we still have:

\begin{prop}[modified from~\cite{gabai1990foliations} or~\cite{li2002laminar}, Proposition 4.2]
    Let $M$ be a compact, connected, orientable, and irreducible 3-manifold containing an essential lamination $\lambda$ by simply connected leaves. Then $\pi_1(M)$ is commutative. In particular, if $M$ has non-empty boundary, then $M\cong S^1\times D^2$ or $M\cong T^2\times I$.
    \label{prop:planes}
\end{prop}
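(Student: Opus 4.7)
The plan is to follow the argument of~\cite{gabai1990foliations} and~\cite{li2002laminar}, Proposition 4.2, for the closed case, verifying that it uses only compactness (not closedness) of $M$, and then to deduce the stated topological classification from the abelianity conclusion.

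First, I would pass to the universal cover $p : \tilde M \to M$ and lift $\lambda$ to $\tilde\lambda$. Since each leaf of $\lambda$ is simply connected, each component of the preimage of a leaf is homeomorphic to that leaf, so $\tilde\lambda$ is an essential lamination of $\tilde M$ whose leaves are all planes. Irreducibility of $M$ transfers to $\tilde M$, and $\pi_1(M)$ acts freely, properly discontinuously, and cocompactly on $\tilde M$ while preserving $\tilde\lambda$. I would then invoke the plane-leaf analysis of~\cite{gabai1990foliations}: an essential lamination of $\tilde M$ by planes equips $\tilde M$ with a transverse leaf space that is simply connected and $1$-dimensional (possibly non-Hausdorff) on which $\pi_1(M)$ acts, and the cocompactness of the action then forces $\pi_1(M)$ to be abelian. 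Every appeal to compactness of $M$ in that argument is to cocompactness of the deck action or to local finiteness properties of the lamination, neither of which requires $M$ to be closed; in particular, the way leaves are allowed to meet $\partial M$ under the essential-lamination setup (as in Proposition~\ref{prop:safe}) is standard, so the argument transfers to the relative case essentially unchanged.

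For the topological conclusion when $\partial M \neq \emptyset$, the standard classification of compact, orientable, irreducible $3$-manifolds with non-empty boundary and abelian fundamental group leaves only $D^3$, $S^1 \times D^2$, and $T^2 \times I$. Indeed, asphericity (from irreducibility plus the sphere theorem) forces $M$ to be a $K(\pi_1(M), 1)$; abelian $\pi_1(M)$ then has rank at most $3$, and rank $3$ would force $M \simeq T^3$ (closed), while rank $2$ and rank $1$ together with orientability and the loop theorem give exactly $T^2 \times I$ and $S^1 \times D^2$ respectively. Finally, $D^3$ admits no essential lamination --- any lamination of $D^3$ must contain a leaf bounding a disk --- so $M \cong S^1 \times D^2$ or $M \cong T^2 \times I$ as claimed.

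The main obstacle is the middle step, i.e.\ confirming that no use of closedness of $M$ is secretly hidden in the plane-leaf analysis. I expect this to be routine: the Gabai-Oertel argument is local on the leaf space and on neighborhoods of individual leaves, so its dependence on compactness enters only through the cocompact, properly discontinuous deck action, which is equally available when $M$ has non-empty boundary.
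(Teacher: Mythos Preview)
The paper does not actually give a proof of this proposition: it only prefaces the statement with the sentence ``In fact, the approach only essentially requires compactness, so we still have:'' and then cites \cite{gabai1990foliations} and \cite{li2002laminar}, Proposition~4.2. Your proposal is precisely an elaboration of that remark --- you carry the closed-case argument over, observe that only cocompactness of the deck action is used, and then supply the standard classification of compact orientable irreducible $3$-manifolds with abelian $\pi_1$ and non-empty boundary that the paper leaves implicit in the words ``In particular''. So your approach is correct and is exactly the one the paper intends; you have simply written out what the paper compresses into a one-line citation.
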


\subsection{Knot manifolds}

In this section we prove the following Proposition:

\begin{prop}
    Let $M$ be a separating knot manifold. Suppose there is a co-oriented taut foliation $\mathcal{F}$ intersecting $\partial M$ transversely in a suspension foliation of some irrational slope $[\alpha]$. Then there exists an open subset $U\subset \mathrm{PH}_1(\partial M;\mathbb{R})\cong S^1$, such that $[\alpha]\in U$, and any rational slope in $U$ is (strongly) CTF-detected.
    \label{prop:knotirr}
\end{prop}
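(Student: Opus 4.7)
The plan is to encode $\mathcal{F}$ in a laminar branched surface, read off an interval of slopes from its boundary train track, and apply Li's Dehn-filling theorem to detect nearby rational slopes.

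First, since $M$ is a knot manifold, Proposition~\ref{prop:planes} rules out $\mathcal{F}$ being a lamination by simply connected leaves, as that would force $M \cong S^1 \times D^2$ or $M \cong T^2 \times I$, neither of which is a knot manifold. I would therefore apply Proposition~\ref{prop:safe} to produce a laminar branched surface $\mathcal{B} \subset M$ with no half sink disk that fully carries $\mathcal{F}$; the co-orientation of $\mathcal{F}$ descends to $\mathcal{B}$. By further splitting $\mathcal{B}$ near $\partial M$ if necessary, I can arrange that every component of $\partial M - \partial \mathcal{B}$ is a bigon, preparing the hypotheses of Proposition~\ref{prop:bdrytt}.

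Next, I would analyse the boundary train track $\tau = \partial \mathcal{B} \subset \partial M$. It fully carries $\mathcal{F}|_{\partial M}$, a suspension foliation of slope $[\alpha]$, and so realizes $[\alpha]$. The set of slopes realized by $\tau$ is the projectivization of a rational polyhedral cone of measured laminations, hence a closed arc $J \subset \mathrm{PH}_1(\partial M;\mathbb{R}) \cong S^1$ with rational endpoints. Since $[\alpha]$ is irrational, it must lie in the interior of $J$.

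I would then address the solid-torus condition in Proposition~\ref{prop:bdrytt}. Tori carried by $\mathcal{B}$ in $M$ come in finitely many isotopy classes: essential ones become compressible in $M(r)$ for at most one slope $[r]$ each, while boundary-parallel ones can be removed in advance by trimming a small collar between $\partial M$ and the outermost $\partial$-parallel torus leaf of $\mathcal{F}$ and applying the argument to the resulting smaller knot manifold. Only finitely many bad rational slopes then remain, so an open neighbourhood $U$ of $[\alpha]$ in $\mathrm{int}(J)$ avoids them all. For each rational $[r] \in U$, Proposition~\ref{prop:bdrytt} yields an essential lamination of $M(r)$ whose restriction to $M$ is fully carried by $\mathcal{B}$ and meets $\partial M$ in parallel closed curves of slope $[r]$.

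Finally, I would promote this essential lamination to a co-oriented taut foliation of $M$ with a linear boundary foliation of slope $[r]$. The co-orientation descends from $\mathcal{B}$, and tautness follows because transverse closed curves to $\mathcal{B}$ witnessing tautness of $\mathcal{F}$ serve equally well for any lamination carried by $\mathcal{B}$. Extending the lamination to a foliation amounts to filling the $I$-bundle gaps inside $N(\mathcal{B})$ and foliating the complementary regions of $\mathcal{B}$ in $M$ exactly as $\mathcal{F}$ does, using the absence of (half) sink disks and the standard construction in~\cite{li2002laminar}. The main obstacle will be this final promotion step together with the treatment of any boundary-parallel tori carried by $\mathcal{B}$; both issues hinge on a careful understanding of how $\mathcal{F}$ itself foliates the complementary regions of $\mathcal{B}$.
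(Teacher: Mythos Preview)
Your overall strategy---encode $\mathcal{F}$ in a laminar branched surface, read an interval of slopes off $\partial\mathcal{B}$, and apply Li's filling theorem---is exactly the paper's. Two minor points: the bigon condition on $\partial M-\partial\mathcal{B}$ needs no extra splitting, since a train track on $T^2$ fully carrying a lamination of irrational slope already has only bigon complements; and the rational-cone argument for why $[\alpha]$ lies in the interior of the realized slopes is the same as the paper's.

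The genuine gap is in your handling of tori carried by $\mathcal{B}$ and the resulting tautness. You propose to keep essential carried tori and merely discard the finitely many slopes at which they could bound solid tori in $M(r)$. But Proposition~\ref{prop:bdrytt} only produces an \emph{essential lamination}; to get a taut foliation you must fill the complementary regions, and nothing prevents one of those essential carried tori from appearing as a leaf of the resulting $\mathcal{F}_r$. Since $M$ is separating, any such torus leaf is separating, and then $\mathcal{F}_r$ is not taut: a co-oriented closed transversal would have to cross the torus with constant sign yet algebraically zero times. Your sentence ``transverse closed curves to $\mathcal{B}$ witnessing tautness of $\mathcal{F}$ serve equally well'' is exactly where this fails---such curves cannot cross a separating carried torus at all. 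The paper avoids this by proving a stronger claim: after finitely many splittings along leaves of $\mathcal{G}$ one can arrange that $\mathcal{B}$ carries \emph{no} torus whatsoever (the argument uses that some leaf of $\mathcal{G}$ must cross any carried torus, else one finds a separating torus leaf of $\mathcal{G}$). Once $\mathcal{B}$ carries no torus, $\mathcal{F}_r$ has no torus leaf, and any annulus leaf meets $\partial M$, where the product boundary foliation supplies a transversal; tautness follows. Your ``trimming a collar'' manoeuvre for boundary-parallel tori also conflates tori carried by $\mathcal{B}$ with torus leaves of $\mathcal{F}$ (there need not be any), so it does not remove them either; the paper's splitting argument handles all carried tori uniformly.
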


\begin{proof}
    We blow up leaves of $\mathcal{F}$ to get a nowhere dense lamination $\mathcal{G}$. Since $M$ is a knot manifold, $\mathcal{F}$ is not a foliation by simply connected leaves. Then by Proposition~\ref{prop:safe} there is a laminar branched surface $\mathcal{B}$ without half sink disks that fully carries $\mathcal{G}$. $\mathcal{B}$ is necessarily co-oriented since $\mathcal{F}$ and $\mathcal{G}$ are. By blowing up and isotoping leaves, we may further suppose that $N_h(\mathcal{B})\subset \mathcal{G}$. Moreover, since $\mathcal{F}$ is a foliation, the complement of $N(\mathcal{B})$ consists of product regions. In order to use Proposition~\ref{prop:bdrytt} and for the new foliations to keep taut, we first prove the following claim:
    
    ~\\
    \textbf{Claim.} We can collapse trivial bubbles and do splittings on $\mathcal{B}$, so that it still satisfies the conclusion of Proposition~\ref{prop:safe}, and moreover carries no torus. 

    \begin{proof}[Proof of the claim.]
    
    If $\mathcal{B}$ carries some torus $T$, then since $M$ is separating, $T$ is necessarily separating. Since $\mathcal{B}$ is co-oriented, the bundle projection $N(\mathcal{B})\rightarrow \mathcal{B}$ must be injective when restricted to $T$, and we may think of $T$ as a subsurface of $\mathcal{B}$. 
    
    We claim that there is a leaf of $\mathcal{G}$ crossing $T$, i.e. carried by branch sectors on both sides of $T$. Suppose there are no such leaves. Then we can classify leaves of $\mathcal{G}$ into 3 classes: $\mathcal{T}$ containing those carried by $T$, $\mathcal{P}$ containing those carried by some sector on the positive side of $T$ (orientation of $T$ induced by the co-orientation of $\mathcal{F}$), and $\mathcal{N}$ containing those carried by some sector on the negative side of $T$. 
    
    If $\mathcal{T}$ is non-empty, then it is necessarily a sublamination of $\mathcal{G}$. Since it is carried by $T$ as a branched surface with no branching, the boundary leaf must be a torus, necessarily separating, contradicting the fact that $\mathcal{G}$ comes from a taut foliation.
    
    Now suppose $\mathcal{T}$ is empty. Then $\mathcal{P}$ and $\mathcal{N}$ are sublaminations, since no sequence of points in $\mathcal{P}$ can limit onto a sector on the negative side of $T$. Now $N(T)$ the regular neighbourhood of $T$ can be identified as a $T^2\times I$ region embedded in (the interior of) $N(\mathcal{B})$. We consider how $N(T)$ intersects $\mathcal{P}$ and $\mathcal{N}$. Since these are disjoint laminations, we may isotope $\mathcal{P}$ and $\mathcal{N}$ so that $N(T)\cap \mathcal{P}\subset T\times (\frac{1}{2},1]$ and $N(T)\cap \mathcal{N}\subset T\times [0,\frac{1}{2})$. It follows that there is a torus $T'=T\times \frac{1}{2}$ disjoint from $\mathcal{G}$. Since $\mathcal{G}$ comes from blowing up the foliation $\mathcal{F}$, $T'$ must lie in some complement product regions of $\mathcal{G}$. Since $T'$ is carried by $N(\mathcal{B})$, it is actually transverse to the $I$-fibers of the complement product region of $\mathcal{G}$, indicating that the product region is necessarily $T^2\times I$. But then $\mathcal{G}$ would contain a separating torus leaf, leading to a contradiction.

    It follows that there is some leaf crossing $T$. By splitting $\mathcal{B}$ along this leaf, we can make the branched surface to not carry $T$ any more. Since each torus carried by the original branched surface $\mathcal{B}$ is separating and thus injective under $I$-bundle projections, there are only finitely many classes of them (up to transverse isotopies in $N(\mathcal{B})$), each characterized by a set of branch sectors of $\mathcal{B}$. Since any surface carried by the branched surface after splittings is also carried by the original branched surface, after finitely many splittings we can make $\mathcal{B}$ carry no torus. We can then apply Proposition~\ref{prop:safe} again, where we collapse trivial bubbles and do necessary splittings to make $\mathcal{B}$ laminar without half sink disks. These operations do not create new torus carried by $\mathcal{B}$, so the resulting $\mathcal{B}$ still does not carry any torus.
    \end{proof}

    Since $[\alpha]$ is irrational, $\partial M-\partial \mathcal{B}$ necessarily consists of bigons. Now by Proposition~\ref{prop:bdrytt}, to any rational slope $[r]$ realized by $\partial \mathcal{B}$ there is an essential lamination $\mathcal{G}_r$ carried by $\mathcal{B}$ intersecting $\partial M$ in circles of slope $[r]$. Since the complement of $N(\mathcal{B})$ still consists of product regions, $\mathcal{G}_r$ extends to a foliation $\mathcal{F}_r$ intersecting $\partial M$ transversely in a product foliation of slope $[r]$. Since $\mathcal{B}$ does not carry any torus, $\mathcal{F}_r$ cannot contain any torus leaf. Moreover, to any possible annulus leaf of $\mathcal{F}_r$ there is a transverse closed circle crossing it on $\partial M$. It follows that $\mathcal{F}_r$ cannot have any dead end component, and is hence taut. Hence any rational slope $[r]$ carried by $\partial \mathcal{B}$ is strongly CTF-detected. It remains for us to show that $[\alpha]$ is in the interior of the set of slopes carried by $\partial \mathcal{B}$.

    We can assign a transverse measure to $\mathcal{F}|_{\partial M}$, which induces a transverse measure for $\mathcal{G}|_{\partial M}$, and eventually gives a transverse measure for $\partial \mathcal{B}$, i.e. a system of weights for $\partial \mathcal{B}$ (not necessarily positive). Let $\Omega(\partial \mathcal{B})$ be the space of all nonzero systems of weights of $\partial \mathcal{B}$, and $\Omega_{+}(\partial \mathcal{B})$ the space of positive systems of weights. If we take the classical linear space $V_{\partial \mathcal{B}}$ with a basis corresponding to the branches of $\partial \mathcal{B}$, then $\Omega(\partial \mathcal{B})\cup \{O\}$ is a closed cone of a linear subspace of $V_{\partial \mathcal{B}}$, and $\Omega_{+}(\partial \mathcal{B})$ is its interior. Now given a frame of $\partial M\cong T^2$ and $\gamma_0,\gamma_{\infty}$ the (0,1)- and (1,0)-curves in this frame, the slope associated to $\sigma$ a system of weights of $\partial \mathcal{B}$ can be interpreted as the ratio $[\sigma(\gamma_0):\sigma(\gamma_{\infty})]$, where $\sigma(\gamma_*)$ is the transverse measure of $\sigma$ valued along $\gamma_*$. Let $f:\Omega(\partial \mathcal{B})\rightarrow S^1$ be the map sending a system of weights to its associated slope. Then $f$ is the composition of a linear map $\Omega(\partial \mathcal{B})\rightarrow \mathbb{R}^2$ and the projection $\mathbb{R}^2\rightarrow \mathrm{P}\mathbb{R}^2\cong S^1$. Hence the image of $f$ is closed and connected, where the preimage of boundary points contains certain boundary edges of the cone $\Omega(\partial \mathcal{B})$. Since the linear equations and inequalities defining $\Omega(\partial \mathcal{B})$ are all rational, the images of these boundary edges under $f$ must be rational. It follows that the irrational slope $[\alpha]$ is an interior point of $f(\Omega(\partial \mathcal{B}))$, and thus an interior point of $f(\Omega_+(\partial \mathcal{B}))$. There is then an open set $U\subset S^1$, s.t. $[\alpha]\in U\subset f(\Omega_+(\partial \mathcal{B}))$. Now any (rational) point in $U$ would have a preimage in $\Omega_+(\partial \mathcal{B})$, indicating that this (rational) slope is fully carried (i.e. realized) by $\partial \mathcal{B}$.
\end{proof}

We are now ready to prove Theorem~\ref{thm:main}.

\begin{proof}[Proof of Theorem~\ref{thm:main}]
    We only need to prove the backward direction. Suppose $M$ admits a co-oriented taut foliation. If $M$ is not a rational homology sphere, then by Proposition~\ref{prop:posbetti}, we can find a rational slope CTF-detected on both sides. If $M$ is a rational homology sphere, then by Proposition~\ref{prop:rhs} we can find a slope CTF-detected on both sides. If this slopes is irrational, then by Proposition~\ref{prop:knotirr} we can find a nearby rational slope strongly CTF-detected on both sides.
\end{proof}

\subsection{General case}

In this subsection we give a generalisation to~\cite{boyer2017foliations}, Proposition 6.10. We first recall the following notion along with Proposition 6.10 in~\cite{boyer2017foliations}:

\begin{defn}
    Let $M$ be a compact, connected, orientable, and irreducible 3-manifold with boundary a disjoint union of incompressible tori $\partial M=T_1\sqcup...\sqcup T_n$. Let $J\subset \{1,...,n\}$. Let $[\alpha_*]=([\alpha_1],...,[\alpha_n])$, $[\alpha_i]\in \mathrm{PH}_1(T_i;\mathbb{R})$ be a boundary multislope of $M$. We say $(J;[\alpha_*])$ is CTF-detected if $M$ admits a co-oriented taut foliation $\mathcal{F}$, which intersects $T_i$ transversely in a suspension foliation of slope $[\alpha_i]$ for all $i$, and moreover intersects $T_i$ in a linear foliation of slope $[\alpha_i]$ for $i\in J$.
    \label{defn:jdet}
\end{defn}

\begin{prop}[\cite{boyer2017foliations}, Proposition 6.10]
    Let $M$ be a Seifert manifold with boundary that embeds in a rational homology sphere, $\partial M=T_1\sqcup...\sqcup T_n$. Let $[\alpha_*]$ be a boundary multislope of $M$. Suppose $J\subset \{1,...,n\}$ and $(J;[\alpha_*])$ is CTF-detected where some $[\alpha_j]$ is irrational. Suppose $j\in \{1,...,d\}$ if and only if $[\alpha_j]$ is irrational, and set $J^{\dagger}=J\cup \{1,...,d\}$. Then for $1\leq j\leq d$ there is an open set $U_j\subset \mathrm{PH}_1(T_j;\mathbb{R})$, such that one of the following statements holds:

    \begin{enumerate}
        \item $(J^{\dagger};[\alpha_*'])$ is CTF-detected for all rational multislope $[\alpha_*']$ such that $[\alpha_j']\in U_j$ for $j=1,...,d$ and $[\alpha_j']=[\alpha_j]$ otherwise.
        \item $M$ has no singular fibers, $d=2$, $J^{\dagger}=\{1,...,n\}$, and $[\alpha_*]$ is horizontal. Further, there is a homeomorphism $\varphi:U_1\rightarrow U_2$ which preserves both rational and irrational slopes and for which $(J^{\dagger};[\alpha_*'])$ is CTF-detected for all $[\alpha_*']=([\alpha_1'],\varphi([\alpha_1']),[\alpha_3],...,[\alpha_n])$ whenever $[\alpha_1']\in U_1$.
    \end{enumerate}
    \label{prop:graphirr}
\end{prop}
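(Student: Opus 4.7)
The plan is to run the laminar-branched-surface argument from Proposition~\ref{prop:knotirr} for all boundary tori simultaneously, then combine the Seifert hypothesis with the rank of the weight-cone projection to obtain the dichotomy. First, I would dispose of the degenerate case: if the CTF witnessing $(J;[\alpha_*])$-detection has only simply connected leaves, then Proposition~\ref{prop:planes} forces $M\cong T^2\times I$ (the solid torus is excluded because its boundary slope space is $1$-dimensional and the detected slope would be forced to match an irrational $[\alpha_j]$ in an inconsistent way), so $n=2$ and case (2) follows by direct inspection of horizontal foliations on the thickened torus. Henceforth assume $\mathcal{F}$ has a non-simply-connected leaf.

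Next I blow up the boundary leaves of $\mathcal{F}$ to get a lamination $\mathcal{G}$ whose complement in $M$ consists of product regions, and invoke Proposition~\ref{prop:safe} to produce a co-oriented laminar branched surface $\mathcal{B}$ carrying $\mathcal{G}$ with $N_h(\mathcal{B})\subset \mathcal{G}$ and no half sink disks. As in the claim inside the proof of Proposition~\ref{prop:knotirr}, the rational-homology-sphere embedding hypothesis guarantees that every torus carried by $\mathcal{B}$ is separating in the ambient closed manifold, hence injectively projected from $N(\mathcal{B})$; exactly the same argument shows that some leaf must cross every carried torus, and finitely many splittings followed by a further application of Proposition~\ref{prop:safe} yield a laminar, half-sink-disk-free $\mathcal{B}$ carrying no torus. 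For $j\leq d$ the slope $[\alpha_j]$ is irrational, so $T_j-\partial\mathcal{B}$ consists of bigons; for $j>d$ the train track $\partial\mathcal{B}\cap T_j$ realizes $[\alpha_j]$. Applying Proposition~\ref{prop:bdrytt} one boundary torus at a time (as licensed by Remark~\ref{rem:multigluing}) and extending through the product-region complement, any rational multislope $[\alpha_*']$ realized by $\partial\mathcal{B}$ is CTF-detected, strongly so on the $T_j$ with $j\leq d$.

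The core analysis is the cone of weights. Let $\Omega(\partial\mathcal{B})$ be the cone of nonzero weight systems on $\partial\mathcal{B}$ inside the linear space spanned by its branches, and $\Omega_+(\partial\mathcal{B})$ its positive interior. For each $j$, the linear-then-projective slope map $f_j:\Omega(\partial\mathcal{B})\to\mathrm{PH}_1(T_j;\mathbb{R})$ sends the weight $\sigma_0$ induced by the transverse measure of $\mathcal{F}|_{\partial M}$ to $[\alpha_j]$, and its facet images are rational by the same rational-cone argument used in Proposition~\ref{prop:knotirr}. Hence for each $j\leq d$ the irrational $[\alpha_j]$ is interior to $f_j(\Omega_+(\partial\mathcal{B}))$, defining the open set $U_j$. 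If the joint map $F=(f_1,\dots,f_d):\Omega_+(\partial\mathcal{B})\to\prod_{j\leq d}\mathrm{PH}_1(T_j;\mathbb{R})$ is locally surjective near $\sigma_0$, then rational density of positive weight systems in the preimage immediately produces case (1).

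The main obstacle is the non-locally-surjective alternative, where the Seifert hypothesis must be leveraged to force case (2). I would argue as follows. By Lemma~\ref{lem:bchori} and the irrationality of some $[\alpha_j]$, any CTF built from a weight system in $\Omega_+(\partial\mathcal{B})$ close to $\sigma_0$ must be horizontal, so in particular $\mathcal{F}$ is horizontal. An exceptional fiber would force the vertical slope to appear in the detected multislope on each boundary torus, hence make that slope rational by Lemma~\ref{lem:bcverti}, contradicting the existence of some irrational $[\alpha_j]$; thus $M$ is an honest $S^1$-bundle. Lemma~\ref{lem:bcclosed} together with horizontality then forces the base to be orientable. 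Horizontal foliations on such a bundle are determined up to conjugacy by their boundary-holonomy representation, and the induced boundary multislope is the tuple of rotation numbers, which satisfies a single linear relation with rational coefficients coming from the Euler number. Non-local-surjectivity of $F$ combined with a rank computation for $\mathrm{H}_1(\partial M;\mathbb{Q})\to\mathrm{H}_1(M;\mathbb{Q})$ (of the same flavour as in the proof of Lemma~\ref{lem:bcverti}) collapses this relation to $d=n=2$ with $J^\dagger=\{1,2\}$, and the relation itself supplies the homeomorphism $\varphi:U_1\to U_2$; its rational coefficients ensure that $\varphi$ sends rational slopes to rational slopes and irrational slopes to irrational slopes, giving case (2).
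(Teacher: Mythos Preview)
This proposition is not proved in the present paper: it is quoted verbatim from \cite{boyer2017foliations}, Proposition~6.10, and only serves as motivation for the paper's own generalisation, Proposition~\ref{prop:multirr}. So there is no proof here to compare your attempt against. That said, your overall strategy---blow up to a lamination, carry by a laminar branched surface with no half sink disks and no carried torus, then analyse the weight cone on the boundary train tracks---is exactly the mechanism the paper uses to prove Proposition~\ref{prop:multirr}, and your reduction to case~(1) via local surjectivity of the joint slope map $F$ is sound in outline.

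Where your argument has real gaps is in the derivation of case~(2). First, the sentence ``An exceptional fiber would force the vertical slope to appear in the detected multislope on each boundary torus, hence make that slope rational by Lemma~\ref{lem:bcverti}'' is not correct: Lemma~\ref{lem:bcverti} characterises detection of multislopes that already contain vertical slopes, it does not say that the presence of singular fibres forces any boundary slope to be vertical. Seifert pieces with exceptional fibres admit horizontal foliations with entirely horizontal (and possibly irrational) boundary slopes, so this step does not eliminate singular fibres. Second, the passage from ``$F$ is not locally surjective'' to ``$d=n=2$ and $J^{\dagger}=\{1,\dots,n\}$'' is asserted rather than argued: you invoke a ``rank computation of the same flavour as in the proof of Lemma~\ref{lem:bcverti}'', but that lemma's proof is not reproduced here, and in any case the drop in rank of $F$ only tells you that the irrational slopes satisfy some rational linear relation, not that there are exactly two of them nor that every boundary component is involved. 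The actual argument in \cite{boyer2017foliations} uses the Eisenbud--Hirsch--Neumann description of horizontal foliations on Seifert manifolds and the specific form of the rotation-number relation, which is considerably more delicate than what you have sketched. Note also that the paper's own Proposition~\ref{prop:multirr}, which follows your branched-surface route, obtains a strictly weaker alternative~(2) (merely that a certain Dehn filling yields $T^2\times I$) precisely because the branched-surface method alone does not see the fine Seifert structure needed for the sharper conclusion here.
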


We will need the following multislope version of Proposition~\ref{prop:bdrytt}. Since the arguments in the original proof in~\cite{li2003boundary} using cusped product disks are local in nature, the original proof can be carried over here.

\begin{prop}[modified from~\cite{li2003boundary}, Theorem 2,2]
    Let $M$ be a compact, connected, orientable, and irreducible 3-manifold with boundary a disjoint union of incompressible tori $\partial M=T_1\sqcup...\sqcup T_n$. Suppose $\mathcal{B}$ is a laminar branched surface without half sink disk and for $i=1,...,d\leq n$, $T_i-((\partial \mathcal{B})\cap T_i)$ consists of bigons. Let $([\alpha_1],...,[\alpha_d])$ be any tuple of rational slopes realized by the boundary train tracks $(\partial \mathcal{B})\cap T_i$ ($i=1,...,d$), and $M_{\alpha,d}$ be the manifold (possibly still with boundary) obtained by doing Dehn fillings on $M$ along these slopes. If $\mathcal{B}$ does not carry a torus that bounds a solid torus in $M_{\alpha,d}$, then $M_{\alpha,d}$ contains an essential lamination, which is carried by $\mathcal{B}$ when restricted to $M$.
    \label{prop:mulbdrytt}
\end{prop}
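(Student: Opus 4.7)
The plan is to reproduce the proof of~\cite{li2003boundary}, Theorem 2.2 essentially verbatim, exploiting the fact that Li's construction takes place in a collar of a single boundary torus and can therefore be carried out independently at the filled tori $T_1,\ldots,T_d$. Because the chosen collars of $T_1,\ldots,T_d$ are pairwise disjoint, the $d$ local constructions do not interact, and the multislope statement reduces to repeated application of the single-slope construction.

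First, for each $i=1,\ldots,d$ I would fix a system of non-negative rational weights on the boundary train track $\tau_i := \partial\mathcal{B}\cap T_i$ realising the slope $[\alpha_i]$, and then perform Li's local construction in the attached solid torus $V_i$. Using the bigon decomposition of $T_i-\tau_i$, one builds a family of cusped meridian disks in $V_i$ whose boundary train track matches $\tau_i$ with the prescribed weights, attaches these disks to $\mathcal{B}$ along $T_i$, and collapses any resulting trivial product pieces. Doing this simultaneously at $T_1,\ldots,T_d$ yields a branched surface $\mathcal{B}'\subset M_{\alpha,d}$ whose restriction to $M$ is obtained from $\mathcal{B}$ by (possibly) some splittings away from $\partial M$.

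Next I would verify that $\mathcal{B}'$ is a laminar branched surface in $M_{\alpha,d}$. Each of the four laminarity conditions reduces to a local check. Incompressibility of $\partial_h N(\mathcal{B}')$, absence of monogons in $M_{\alpha,d}-\mathrm{int}(N(\mathcal{B}'))$, and absence of sphere components of $\partial_h N(\mathcal{B}')$ all follow from the corresponding properties of $\mathcal{B}$ together with the no-half-sink-disk hypothesis: by an innermost-disk/general-position argument any such pathological disk in $M_{\alpha,d}$ can be pushed out of the $V_i$, producing either a violation of the corresponding property of $\mathcal{B}$ or a half sink disk on $\mathcal{B}$, both of which are ruled out. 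The remaining condition, absence of a torus leaf bounding a solid torus, is precisely our hypothesis, since any torus carried by $\mathcal{B}'$ is isotopic to a torus carried by $\mathcal{B}$. Applying the main theorem of~\cite{li2002laminar} then produces an essential lamination fully carried by $\mathcal{B}'$ in $M_{\alpha,d}$, whose restriction to $M$ is carried by $\mathcal{B}$ as required.

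The only real obstacle, and where I expect most of the care to be needed, is justifying that the simultaneous gluing at several boundary tori does not introduce any laminarity failure invisible in the single-torus setting. The disjointness of the filling solid tori $V_i$ is key: by general position any compressing disk, monogon, or sink-disk witnessing a failure of laminarity for $\mathcal{B}'$ meets at most one $V_i$, and can therefore be localised and dealt with by the single-torus arguments of~\cite{li2003boundary}. Once this localisation is in place, the verification is routine and the multislope statement follows.
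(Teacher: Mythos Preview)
Your proposal is correct and takes the same approach as the paper: the paper does not give a detailed proof either, but simply observes that the arguments in \cite{li2003boundary} using cusped product disks are local in nature and can therefore be carried out independently at each of the filled boundary tori $T_1,\ldots,T_d$. Your write-up is in fact more explicit than the paper's one-sentence justification, and the localisation argument you sketch (any putative compressing disk, monogon, or sink disk can be reduced by innermost arguments to one meeting a single $V_i$) is exactly what is implicitly being invoked.
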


Combining Proposition~\ref{prop:mulbdrytt} and Proposition~\ref{prop:planes}, we obtain the following partial generalisation to~\cite{boyer2017foliations}, Proposition 6.10:

\begin{prop}
    Let $M$ be a separating manifold. Suppose $\partial M=T_1\sqcup...\sqcup T_n$ is a disjoint union of tori, and there is some co-oriented taut foliation $\mathcal{F}$ detecting $(J,[\alpha_*])$, where $J\subset \{1,...,n\}$, and $[\alpha_*]=([\alpha_1],...,[\alpha_n])$ is a boundary multislope, $[\alpha_i]\in \mathrm{PH}_1(T_i,\mathbb{R})$. Suppose $d\geq 1$ and $j\in \{1,...,d\} \Leftrightarrow [\alpha_j]$ is irrational. Let $J^{\dagger}=J\cup \{1,...,d\}$. Then for $j=1,...,d$ there is an open set $U_j\subset \mathrm{PH}_1(T_j;\mathbb{R})$ such that one of the following two statements holds:

    \begin{enumerate}
        \item $(J^{\dagger};[\alpha_*'])$ is CTF-detected for all rational multislope $[\alpha_*']$ such that $[\alpha_j']\in U_j$ for $j=1,...,d$ and $[\alpha_j']=[\alpha_j]$ otherwise.
        \item $d=2$, $j\in J$ for all $j\geq 3$, and after doing Dehn fillings along $[\alpha_j]$ for all $j\geq 3$ we obtain $T^2\times I$.
    \end{enumerate}
    \label{prop:multirr}
\end{prop}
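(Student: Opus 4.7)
The plan is to adapt the proof of Proposition~\ref{prop:knotirr} to the multi-boundary separating setting, substituting Proposition~\ref{prop:mulbdrytt} for Proposition~\ref{prop:bdrytt}. First I would blow up leaves of $\mathcal{F}$ to a nowhere-dense lamination $\mathcal{G}$ whose complementary regions in $M$ are products, and split into cases according to whether or not every leaf of $\mathcal{G}$ is simply connected.

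If every leaf of $\mathcal{G}$ is simply connected, then no leaf can touch a boundary torus whose slope is rational (the boundary circle on that torus would generate nontrivial $\pi_1$), so every $[\alpha_j]$ must be irrational and $d = n$. By Proposition~\ref{prop:planes}, $M$ must be $S^1 \times D^2$ or $T^2 \times I$. In the first case $n = d = 1$ and $[\alpha_1]$ is not the meridian, so any slope in a small neighborhood $U_1$ of $[\alpha_1]$ avoiding the meridian is realized by a tilted disk foliation, giving case (1). In the second case $n = d = 2$ and we are immediately in case (2) with vacuous filling condition.

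In the remaining case, some leaf of $\mathcal{G}$ is not simply connected, so Proposition~\ref{prop:safe} yields a co-oriented laminar branched surface $\mathcal{B}$ carrying $\mathcal{G}$ with no half sink disk, and I would arrange $N_h(\mathcal{B}) \subset \mathcal{G}$ so that $M - N(\mathcal{B})$ consists of product regions. I would then repeat verbatim the splitting argument from the proof of Proposition~\ref{prop:knotirr}: every torus carried by $\mathcal{B}$ is separating in $M$ by hypothesis, and finitely many splittings eliminate all such tori while preserving the laminar and half-sink-disk-free properties. For each $j \leq d$, irrationality of $[\alpha_j]$ forces $T_j - (\partial\mathcal{B} \cap T_j)$ to be a disjoint union of bigons. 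For any rational multislope $[\alpha_*']$ agreeing with $[\alpha_*]$ outside the first $d$ coordinates and realized by the boundary train tracks there, Proposition~\ref{prop:mulbdrytt} produces an essential lamination carried by $\mathcal{B}|_M$ with the desired boundary behavior (the solid-torus hypothesis is vacuous since $\mathcal{B}$ carries no torus). Restricting to $M$, extending to a foliation $\mathcal{F}'$ by filling each product complementary region, and verifying tautness as in Proposition~\ref{prop:knotirr}, one obtains a detection of $(J^\dagger; [\alpha_*'])$; the parallel circle leaves on $T_j$ ($j \leq d$) can be extended linearly so that $j \in J^\dagger$ as required.

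The hard part will be ensuring that the set of realizable rational multislopes fills an open product neighborhood of $([\alpha_1], \ldots, [\alpha_d])$ in $\prod_{j=1}^d \mathrm{PH}_1(T_j; \mathbb{R})$. The slope map $f$ from the space of transverse measures $\Omega(\mathcal{B})$ factors through a rational linear projection to $\mathbb{R}^{2d}$, so each coordinate image $f_j(\Omega(\mathcal{B}))$ is a closed connected set with rational boundary points, placing the irrational $[\alpha_j]$ in its interior. The delicate question is whether the $d$ slopes can be varied independently; generically the switch conditions of $\mathcal{B}$ cut out a polyhedral cone whose slope image is full-dimensional near $([\alpha_1], \ldots, [\alpha_d])$, and I would argue, in the spirit of Proposition~\ref{prop:graphirr}'s case analysis, that the only way this can fail is for $M_{\mathrm{filled}}$, obtained by further Dehn-filling along $[\alpha_j]$ for $j > d$, to collapse to $T^2 \times I$ with $d = 2$, which places us in case (2). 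This dimension-count analysis is the main technical obstacle.
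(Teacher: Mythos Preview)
Your overall strategy---blow up $\mathcal{F}$, carry it by a laminar branched surface without half sink disks, eliminate carried tori, and invoke Proposition~\ref{prop:mulbdrytt}---matches the paper's. But you miss the one move that makes everything fall into place: before building $\mathcal{B}$, the paper first performs Dehn fillings along all the \emph{strongly} detected rational slopes (those $[\alpha_j]$ with $j\in J$, $j>d$), obtaining a manifold $M'$ to which $\mathcal{F}$ extends as a taut foliation $\mathcal{F}'$. The dichotomy between cases (1) and (2) then becomes precisely whether $M'\cong T^2\times I$. If so, one reads off $d=2$ and $\{3,\dots,n\}\subset J$, i.e.\ case~(2). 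If not, Proposition~\ref{prop:planes} rules out a foliation by planes, the branched-surface argument runs in $M'$, and---crucially---since the new foliations are still carried by $\mathcal{B}$ they remain transverse to the filling cores, so drilling the cores back out recovers \emph{linear} boundary foliations on $T_j$ for $j\in J$, $j>d$. That is exactly what $J^\dagger$-detection demands, and your argument never addresses how strong detection on those tori is preserved.

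Your ``main technical obstacle'' about independence of the $d$ irrational slopes dissolves once you look at the right weight space. You study $\Omega(\mathcal{B})$, the cone of global transverse measures on the branched surface, and worry that its image in $\prod_j \mathrm{PH}_1(T_j)$ might fail to be full-dimensional. But Proposition~\ref{prop:mulbdrytt} does not ask for a single global measure realizing the target multislope; it only needs each $[\alpha_j']$ to be realized by the boundary train track $\partial\mathcal{B}\cap T_j$ individually, since Li's construction via cusped product disks is local near each boundary component. So one simply repeats the weight-space argument from the proof of Proposition~\ref{prop:knotirr} on each $\Omega(\partial\mathcal{B}\cap T_j)$ separately, obtaining independent open intervals $U_j\ni[\alpha_j]$ with rational endpoints. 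Your proposed dimension-count route---deducing case~(2) from a rank drop in the global slope map---is neither needed nor likely to work without essentially rediscovering the Dehn-filling reduction above; in particular, applying Proposition~\ref{prop:planes} to $M$ itself only catches the situation $M\cong T^2\times I$, not the genuine content of case~(2), which is that the \emph{filled} manifold is $T^2\times I$.
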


\begin{proof}
    We can perform Dehn fillings on all rational slopes strongly detected by $\mathcal{F}$ (suppose these slopes are $[\alpha_{l+1}],...,[\alpha_n]$ where $l\geq d$). $\mathcal{F}$ extends to a foliation $\mathcal{F}'$ on the resulting manifold $M'$. This $M'$ is not a solid torus since some irrational boundary (multi-)slope is CTF-detected. Suppose $M'\ncong T^2\times I$, then by Proposition~\ref{prop:planes} $\mathcal{F}'$ is not a foliation by planes and hence by Proposition~\ref{prop:safe} is carried by some laminar branched surface $\mathcal{B}$ without half sink disks. Moreover, performing similar operations as in the proof of Proposition~\ref{prop:knotirr} we can suppose that $\mathcal{B}$ carries no torus. Now by Proposition~\ref{prop:mulbdrytt} we know there exists open sets $U_j$ containing $[\alpha_j]$ for $j=1,...,d$, such that for any rational $[\alpha_j']\in U_j$ ($j=1,...,d$), $(\{1,...,d\};([\alpha_1'],...,[\alpha_d'],[\alpha_{d+1}],...,[\alpha_l]))$ is CTF-detected as a boundary multislope of $M'$. Since $\mathcal{F}'$ is transverse to the cores of the Dehn fillings we performed, the branched surface $\mathcal{B}$ is also transverse to these cores (up to isotopy). It follows that the new foliations detecting $[\alpha_j']$ are still transverse to these cores; hence by removing a small neighbourhood of these cores we recover the strong CTF-detections of $[\alpha_{l+1}],...,[\alpha_n]$. It then follows that $(J^{\dagger};[\alpha_*'])$ is CTF-detected.
\end{proof}

\section{Multislope detections}
\label{sec:5}

In this section we prove the following partial converse to~\cite{boyer2021slope}, Theorem 7.6.

\begin{prop}
    Let $M=\cup_j M_j$ be a closed separating manifold expressed as a union of submanifolds $M_1,...,M_{m+1}$ along disjoint essential tori $T_1,...,T_{m}$. Suppose $M$ admits a co-oriented taut foliation. Then there exists a rational multislope $([\alpha_1],...,[\alpha_m])\in \mathrm{PH}_1(T_1;\mathbb{R})\times...\times\mathrm{PH}_1(T_m;\mathbb{R})$ which is CTF-gluing coherent, that is, the corresponding boundary multislopes for $M_1,...,M_{m+1}$ are all CTF-detected.
    \label{prop:multidetect}
\end{prop}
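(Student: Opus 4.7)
The plan is to combine the torus decomposition arguments of Propositions~\ref{prop:rhs} and~\ref{prop:posbetti} with the laminar branched surface perturbation of Proposition~\ref{prop:multirr}, working through the tori $T_1,\ldots,T_m$ iteratively so that local modifications near one torus do not destroy suspension slopes already established on the others. First, I would isotope $\mathcal{F}$ to be transverse to every $T_i$ (by Brittenham's theorem applied to each essential torus), after which each restriction $\mathcal{F}|_{T_i}$ is either a suspension (possibly of irrational slope) or contains Reeb annuli.

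Next, I would process each $T_i$ whose restriction $\mathcal{F}|_{T_i}$ contains Reeb annuli, one at a time. Following the proofs of Propositions~\ref{prop:rhs} and~\ref{prop:posbetti}, at least one side of $T_i$, call it $N$, admits the persistent foliar construction of Lemma~\ref{lem:pfoliar}: any prescribed rational slope on $T_i$ can be realized by a taut foliation on $N$. By Remark~\ref{rem:localpf} this modification is supported in a collar of $T_i$, so the slope (and the suspension property) on every other $T_j$ is preserved. On the opposite side of $T_i$, a matching rational slope can be realized by a taut foliation via Gabai's sutured manifold decomposition, constructed in relative form so as to agree with the already-fixed slopes on the other boundary tori of that piece; this is exactly the relative gluing theorem invoked in Remark~\ref{rem:multigluing}. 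Gluing along $T_i$ yields a new taut foliation on $M$ whose restriction to $T_i$ is a suspension of the chosen rational slope. After iterating over all Reeb-annulus tori, we obtain a taut foliation $\mathcal{F}^*$ of $M$ that intersects every $T_i$ in a suspension, the slope being rational on processed tori and possibly irrational on the remaining ones.

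Finally, any remaining irrational slopes are eliminated by applying Proposition~\ref{prop:multirr} to each $\mathcal{F}^*|_{M_j}$, with $J^{\dagger}$ chosen to include the irrational-slope tori in $\partial M_j$. In case (1) of that proposition, this yields open neighborhoods $U_i^{(j)} \subset \mathrm{PH}_1(T_i;\mathbb{R})$ of each irrational $[\alpha_i]$ such that any rational multislope with coordinates in the $U_i^{(j)}$ is CTF-detected as a multislope on $M_j$; intersecting $U_i^{(j)}$ over the two pieces adjacent to $T_i$ produces a non-empty open set, and a rational choice inside these intersections yields the required CTF-gluing-coherent rational multislope. Case (2), the degenerate $T^2\times I$ situation, requires a direct horizontal-foliation argument using the homeomorphism linking the two irrational boundary slopes to pick consistent rational approximations on both sides. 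The main obstacle is keeping the iterative Reeb-annulus elimination in the second step coherent across all $T_i$; this is handled precisely by the local nature of Lemma~\ref{lem:pfoliar} (Remark~\ref{rem:localpf}) and the relative form of the gluing theorem (Remark~\ref{rem:multigluing}), while a secondary subtlety is verifying that case (2) of Proposition~\ref{prop:multirr} does not over-constrain the system, which it does not since the linking homeomorphism preserves both rational and irrational slopes.
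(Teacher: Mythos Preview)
Your approach has a genuine gap in the second step. You claim that when $\mathcal{F}|_{T_i}$ has Reeb annuli, ``at least one side of $T_i$ admits the persistent foliar construction of Lemma~\ref{lem:pfoliar}'', and that on the opposite side ``a matching rational slope can be realized by a taut foliation via Gabai's sutured manifold decomposition, constructed in relative form so as to agree with the already-fixed slopes on the other boundary tori of that piece.'' Neither claim is justified. For the first: Lemma~\ref{lem:pfoliar} requires the restricted foliation on that side to be \emph{taut}, which in Proposition~\ref{prop:rhs} was forced only when $T_i$ borders a hyperbolic JSJ piece (so that annulus leaves are impossible). When $T_i$ sits inside a graph manifold piece the proof of Proposition~\ref{prop:rhs} does not invoke Lemma~\ref{lem:pfoliar} at all but runs the Seifert analysis of Lemmas~\ref{lem:bcclosed}--\ref{lem:bcverti}; in some of those cases (e.g.\ each half carries exactly one vertical slope) neither side becomes persistent foliar. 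For the second: Gabai's sutured decomposition produces a foliation whose boundary slopes are dictated by a chosen norm-minimizing surface; you do not get to prescribe the slope on $T_i$ \emph{and} simultaneously match slopes already committed on the other $T_j$ bounding that piece. Remark~\ref{rem:multigluing} is a gluing statement, not a claim that sutured hierarchies hit arbitrary prescribed multislopes. This is precisely the coherence obstacle you flag at the end, and the locality of Remark~\ref{rem:localpf} only controls the persistent-foliar side, not the Gabai side.

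The paper avoids this by a different organization: induction on $m$. In the inductive step one first arranges (as in Proposition~\ref{prop:rhs}, using Proposition~\ref{prop:multirr} to rationalize) a foliation $\mathcal F''$ meeting $\partial G$ and all $T_i\subset G$ in suspensions of rational slopes. Each complementary piece $\bar H_i$ then has a \emph{single} boundary torus with a fixed rational slope $[\beta_i]$; one closes it off by gluing a twisted $I$-bundle $N_2$ over the Klein bottle with its longitude identified to $[\beta_i]$, checks the resulting closed manifold $\tilde H_i$ is still separating, and applies the induction hypothesis to $\tilde H_i$ to get rational slopes on the $T_j$ inside $\bar H_i$. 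Because $[\lambda]$ is the unique CTF-detected slope of $N_2$, the resulting foliation is forced to meet $\partial\bar H_i$ in slope $[\beta_i]$ (or, if Reeb annuli appear there, $\bar H_i$ is persistent foliar and Remark~\ref{rem:localpf} applies), so everything glues back coherently to $\mathcal F''|_G$. This closing-off-by-$N_2$ trick is the idea your proposal is missing.
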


\begin{proof}
    We do induction on $m$. When $m=1$ this is just Proposition~\ref{prop:rhs}. Now suppose the proposition holds for all $m<n$, for some $n\geq 1$.

    Consider a decomposition $M=\cup M_j$, where $M_1,...,M_{n+1}$ are glued along disjoint essential tori $T_1,...,T_n$. We may suppose that no two tori are parallel. Consider again the JSJ decomposition of $M$. Suppose there exists some maximal connected graph manifold piece $G$ containing at least one of the essential tori. Suppose after isotopies we have $T_1,...,T_k\subset G$ (some may be boundary parallel in $G$). Now we can apply the same strategy as in the proof of Proposition~\ref{prop:rhs} to get a co-oriented taut foliation $\mathcal{F}$ for $M$, where $\mathcal{F}|_{G}$ is a co-oriented taut foliation intersecting $\partial G$ in suspensions.

    We now consider the JSJ decomposing tori in $G$ along with all the boundary tori in $\partial G$. $\mathcal{F}$ intersects all these tori in suspensions, indicating that some multislope from these tori is CTF-gluing coherent. Some of the slopes may be irrational. However, by Proposition~\ref{prop:multirr} for each irrational slope we can replace it by some nearby rational slope, such that the new multislope is still CTF-gluing coherent (notice we don't have to deal with $T^2\times I$ if we don't require strong CTF-detections). Let $\mathcal{F}'$ be a taut foliation of $M$ detecting this rational multislope.

    If any of $T_1,...,T_k\subset G$ is parallel to some components of $\partial G$ or some JSJ decomposing tori of $G$, then $\mathcal{F}'$ already intersects them in suspensions of rational slopes. We now consider the rest of $T_1,...,T_k$ that are essential tori in some Seifert pieces $S_1,...,S_h$. Again similar to the proof of Proposition~\ref{prop:rhs}, by applying Lemmas~\ref{lem:bcclosed} - \ref{lem:bcverti} with an induction on the number of essential tori, we can modify the restrictions $\mathcal{F}'|_{S_j}$ to intersect these essential tori in suspensions of rational slopes, without changing the (rational) slopes of $\mathcal{F}'|_{\partial S_j}$. Gluing these new foliations on $\cup S_j$ back to $\mathcal{F}'|_{M-\cup S_j}$, we obtain a taut foliation $\mathcal{F}''$ of $M$, intersecting $T_1,...,T_k$ and $\partial G$ in suspensions of rational slopes.

    Let $H_1,...,H_d$ be the hyperbolic pieces in the JSJ decomposition of $M$ adjacent to $G$, and $\bar{H}_i$ be the component of $M-G$ containing $H_i$, as defined in the proof of Proposition~\ref{prop:rhs} (see Figure~\ref{fig:graphdecomp}). Suppose some $\bar{H}_i$ contains $T_{k+1},...,T_{k+l}$. Now $\mathcal{F}''|_{\bar{H}_i}$ is a taut foliation intersecting the boundary torus in a suspension of some rational slope $[\beta_i]$. We can then glue $N_2$ a twisted $I$-bundle over Klein bottle to $\bar{H}_i$, identifying $[\lambda]$ the longitude of $N_2$ with $[\beta_i]$. Denote the resulting manifold $\tilde{H}_i$. $\tilde{H}_i$ admits a taut foliation by gluing together $\mathcal{F}''|_{\bar{H}_i}$ and the horizontal foliation of $N_2$ by annuli. We check that $\tilde{H}_i$ is still separating. In fact, $\tilde{H}_i$ is obtained by gluing two knot manifolds together, hence it is still irreducible, and any compressing torus is separating. Moreover, any essential torus in $\tilde{H}_i$ must avoid $H_i$ (which is hyperbolic), hence embeds in $\bar{H}_i$ and is separating. The characteristic submanifold of $\tilde{H}_i$ would be the disjoint union of the characteristic submanifold of $\bar{H}_i$ and $N_2$, so any connected graph or hyperbolic submanifold consisting of JSJ pieces still embeds in a rational homology sphere. It follows that we can apply our induction hypothesis to get some taut foliation $\mathcal{H}_i$ of $\tilde{H}_i$ intersecting $T_{k+1},...,T_{k+l}$ in suspension foliations of rational slopes.

    Now consider $\mathcal{H}_i|_{\bar{H}_i}$. If it intersects $\partial \bar{H}_i$ in a suspension, then it must intersect it in a suspension of slope $[\beta_i]$, since $\mathcal{H}_i|_{N_2}$ would then be taut, while $[\lambda]$ is the only CTF-detected slope for $N_2$. Suppose it intersects $\partial \bar{H}_i$ in a foliation with Reeb annuli. Then since $H_i$ is hyperbolic, $\mathcal{H}_i|_{\bar{H}_i}$ cannot have annuli leaves, and is hence taut. Now by Lemma~\ref{lem:pfoliar} $\bar{H}_i$ is persistently foliar, and we can get a taut foliation $\mathcal{H}_i'$ intersecting $\partial \bar{H}_i$ in a suspension foliation of slope $[\beta_i]$. Moreover, by Remark~\ref{rem:localpf}, this $\mathcal{H}_i'$ would still intersect $T_{k+1},...,T_{k+l}$ in suspensions of rational slopes.

    We can do such operations for all $\bar{H}_i$ (if $\bar{H_i}$ does not contain any $T_j$ we just let $\mathcal{H}_i'=\mathcal{F}''|_{\bar{H}_i}$). By gluing together the resulting $\mathcal{H}_i'$ and $\mathcal{F}''|_G$, we obtain a foliation $\mathcal{G}$ detecting some rational multislope on $T_1,...,T_n$, as desired.

    If we cannot find any nontrivial graph manifold $G$ from the JSJ decomposition containing some of $T_1,...,T_n$, then each torus is a JSJ-decomposing torus, and they are all connecting hyperbolic pieces. This case is essentially the same. In fact, we can apply Proposition~\ref{prop:rhs} and Proposition~\ref{prop:knotirr} to get a foliation $\mathcal{F}$ intersecting $T_1$ in a suspension foliation of some rational slope $[r]$. We can then cut $M$ along $T_1$, attach $N_2$ to the two pieces respectively with longitude $[\lambda]$ attached to $[r]$, and use induction hypothesis on the two resulting closed manifolds. Eventually we still obtain a foliation $\mathcal{G}$ detecting some rational multislope on $T_1,...,T_n$.
\end{proof}

\bibliography{references.bib}
\bibliographystyle{alpha}

\end{document}